\tikzstyle{EDR}=[draw=lightgray,line width=0pt,preaction={clip, postaction={pattern=north east lines, pattern color=gray}}]
\tikzstyle{EDR1}=[draw=lightgray,line width=0pt,preaction={clip, postaction={pattern=north west lines, pattern color=gray}}]
\definecolor{mygray}{gray}{0.95}
\definecolor{mypink1}{rgb}{1.2,1.1,0.9}
\definecolor{mypink2}{rgb}{1.0,0.95 ,0.9}
\definecolor{mypink3}{rgb}{1.0,0.6,0.7}
\numberwithin{equation}{section}
\newtheorem{theorem}{Theorem}[section]
\newtheorem{lemma}[theorem]{Lemma}
\newtheorem{proposition}[theorem]{Proposition}
\newtheorem{remark}[theorem]{Remark}
\numberwithin{equation}{section}
\newcommand{\beq}{\begin{equation}}
	\newcommand{\eeq}{\end{equation}}
\newcommand{\beqq}{\begin{equation*}}
	\newcommand{\eeqq}{\end{equation*}}
\newcommand{\ben}{\begin{eqnarray}}
	\newcommand{\een}{\end{eqnarray}}
\newcommand{\beno}{\begin{eqnarray*}}
	\newcommand{\eeno}{\end{eqnarray*}}
\begin{document}
\setcounter{page}{1}
\title[Fourier restriction estimates on quantum Euclidean spaces]
{Fourier restriction estimates on quantum Euclidean spaces }

\author[]{Guixiang Hong}
\address{
School of Mathematics and Statistics\\
Wuhan University\\
Wuhan 430072\\
China}

\email{guixiang.hong@whu.edu.cn}

\author[]{Xudong Lai}
\address{
Institute for Advanced Study in Mathematics\\
Harbin Institute of Technology\\
Harbin
150001\\
China}

\email{xudonglai@hit.edu.cn}
\author[]{Liang Wang}
\address{
School of Mathematics and Statistics\\
Wuhan University\\
Wuhan 430072\\
China}

\email{wlmath@whu.edu.cn}

\thanks{}

\subjclass[2010]{Primary  46L51; Secondary 42B20}
\keywords{Fourier restriction estimates,  Noncommutative $L_{p}$-spaces, Quantum Euclidean spaces  }

\date{\today
}
\begin{abstract}
In this paper, we initiate the study of the Fourier restriction phenomena on quantum Euclidean spaces, and establish the analogues of the Tomas-Stein restriction theorem and the two-dimensional full restriction theorem.
\end{abstract}

\maketitle
\section{Introduction}

Quantum Euclidean spaces and quantum tori which are the model examples of noncommutative locally compact manifolds, have appeared frequently in the literature of mathematical physics, such as string theory and noncommutative field theory \cite{DN01, NS98, SW99}. In recent years, harmonic analysis on this  noncommutative manifold  has been developed very rapidly, such as the pointwise convergence of Fourier series and functional spaces on quantum tori \cite{CXY, Lai21, Xiong18}, singular integral theory and  pseudodifferential operator
theory \cite{GJM, GJP, JPMX}, the commutator estimates and quantum differentiability for quantum Euclidean spaces \cite{ LSZ, Xiong1}.

In brief, quantum Euclidean space $\mathcal{R}_\theta^d$ is the von Neumann subalgebra of $\mathcal{B}(L_2(\mathbb{R}^d))$ generated by $\{U_\theta(t)\}_{t\in\mathbb{R}^d}$, where $\{U_\theta(t)\}_{t\in\mathbb{R}^d}$ is a family of unitary operators on $L_2(\mathbb{R}^d)$ satisfying the following Weyl relation:
\beq U_\theta(t)U_\theta(s)=e^{\frac i2(s,\theta t)}U_\theta(t+s),\quad \text{for\: all}\: t,s\in\mathbb{R}^d,\eeq
where $\theta$ is a $d\times d $ real  antisymmetric matrix. The algebra $\mathcal{R}_\theta^d$ admits a normal semifinite faithful trace $\tau_\theta$, and $L_p(\mathcal{R}_\theta^d)$ is the noncommutative $L_p$ space associated to $(\mathcal{R}_\theta^d, \tau_\theta)$. Note that if $\theta=0$, $L_p(\mathcal{R}_\theta^d)$ is the usual $L_p$ space defined on $\mathbb{R}^d$ with the Lebesgue measure. We refer the reader to next section for more details.

The aim of the present paper is to investigate the Fourier restriction phenomena in this setting.
 For $x\in \mathcal{S}(\mathcal{R}_\theta^d)$, the Schwartz class on $\mathcal{R}_\theta^d$, we define the Fourier transform of $x$ as
\begin{equation*}
  \hat{x}(\xi):=\tau_\theta(xU_\theta(\xi)^*),\quad \text{for}\:\xi\in\mathbb{R}^d.
\end{equation*}
As in the commutative case ($\theta=0$), the Plancherel theorem and the Hausdorff-Young inequalities remain true in $\mathcal{R}_\theta^d$, which are enough to guarantee the fact that the Fourier transform $x\rightarrow\hat{x}$ extends to a continuous  linear map from $L_p(\mathcal{R}_\theta^d)$ to $L_{p^\prime}(\mathbb{R}^d)$ for $1\leq p\leq2,\frac{1}{p}+\frac{1}{p^\prime}=1$ (see Lemma \ref{hy*} for details).
In other words, when $1\leq p\leq2$, given $x\in L_p(\mathcal{R}_\theta^d)$, we have $\hat{x}\in L_{p^\prime}(\mathbb{R}^d)$, which is not defined pointwise but only defined in the sense of almost everywhere. The restriction problem asks: for $x\in L_p(\mathcal{R}_\theta^d)$, whether there is a meaningful restriction of $\hat{x}$ to a subset $\Sigma \subset\mathbb{R}^d$ with Lebesgue measure zero?

When $x\in L_1(\mathcal{R}_\theta^d)$, $\hat{x}$ coincides with a function that is continuous on $\mathbb{R}^d$ and tends to zero at infinity which follows from the Riemann-Lebesgue lemma (see Lemma \ref{RL}), thus $\hat{x}$ can be meaningfully restricted to any set $\Sigma$. At the other extreme, for an arbitrary $x\in L_2(\mathcal{R}_\theta^d)$, by  the Plancherel theorem, $\hat{x}$ is also an arbitrary function in $L_2(\mathbb{R}^d)$, then there is no meaningful restriction to any zero measure set $\Sigma$. Between these two extremes, it is a natural question that what happens to the Fourier transform of $x\in L_p(\mathcal{R}_\theta^d)$ when $1<p<2$. In the case $\theta=0$, it was observed by Stein \cite{Fe1} in 1960s that  for some  measure zero subset $\Sigma\subset\mathbb{R}^d$, such as the unit sphere $S^{d-1}$, it might be possible to obtain meaningful restrictions of the Fourier transforms of functions $f\in L_p(\mathbb{R}^d)$ for some $1\leq p<2$.
Stein formulated the quantitative version of the Fourier restriction problem as follows: for which $1\leq p, q\leq\infty$, the following estimate holds for all Schwartz functions $f$ on $\mathbb{R}^d$,
\begin{equation}\label{resconjecture}
 \|\hat{f}\|_{L_q(S^{d-1}, \;d\sigma)}\leq C_{d,p,q}\|f\|_{L_p(\mathbb{R}^d)}.
\end{equation}
 Here $d\sigma$ is the surface measure on the unit sphere $S^{d-1}$. The Knapp example shows that  when the estimate \eqref{resconjecture} holds we must have  $1\leq p<\frac{2d}{d+1}$ and $q\leq \frac{d-1}{d+1}p^\prime$ (see e.g. Page 291 of \cite{MS}). And Stein's restriction conjecture is that the necessary conditions claimed above are also sufficient. That is, it is conjectured that the estimate \eqref{resconjecture} holds for all $p,q$ satisfying $1\leq p<\frac{2d}{d+1}$ and $q\leq \frac{d-1}{d+1}p^\prime$.
Stein and Fefferman \cite{Fe1}, Zygmund \cite{Zy1} solved this conjecture for $d=2$. When $d\geq3$, the full conjecture is still open except the special case $q=2$ which was proved by Tomas and Stein in \cite{To1, To2}. That is, the estimate \eqref{resconjecture} holds for all $1\leq p\leq 2(d+1)/(d+3)$ when $q=2$.

   In this paper, we first establish an analogue of the full restriction theorem on $\mathcal{R}_\theta^2$ for all $2\times 2 $ real  antisymmetric matrix $\theta$:
\begin{theorem}\label{rt1}
Let $\theta$ be a $2\times 2 $ real  antisymmetric matrix and $x\in\mathcal{S}(\mathcal{R}_\theta^2)$. When $1\leq p< \frac{4}{3}$ and $ q\leq {p^\prime}/3$, we have the following estimate
    \beq\label{res1} \|\hat{x}\|_{L_q(S^1)}\leq C_{p,q}\|x\|_{L_p(\mathcal{R}_\theta^2)}, \eeq
   where $S^1$ denotes the unit circle and $C_{p,q}$ is a constant that depends only on $p,q$.
\end{theorem}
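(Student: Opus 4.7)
\medskip
\noindent\emph{Proof proposal.} The plan is to reduce Theorem \ref{rt1} by duality to an extension estimate, establish the single critical endpoint $L^4(S^1)\to L_4(\mathcal{R}_\theta^2)$ by an explicit Plancherel computation, and recover the full range via complex interpolation and Hölder's inequality.

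\emph{Step 1 (Duality).} Using the noncommutative trace pairing and the identity
\begin{equation*}
\int_{S^1}\hat{x}(\omega)\,g(\omega)\,d\sigma(\omega)=\tau_\theta\Big(x\cdot\int_{S^1}g(\omega)U_\theta(\omega)^*\,d\sigma(\omega)\Big),
\end{equation*}
the restriction estimate \eqref{res1} is equivalent to the extension estimate
\begin{equation*}
\|E^{*}_{\theta}g\|_{L_{p'}(\mathcal{R}_\theta^2)}\leq C_{p,q}\|g\|_{L_{q'}(S^1)},\qquad E^{*}_{\theta}g:=\int_{S^1}g(\omega)U_\theta(\omega)^*\,d\sigma(\omega).
\end{equation*}
Because $S^1$ has finite surface measure, Hölder's inequality on $L_{q'}(S^1)$ reduces the sub-critical range $q<p'/3$ to the critical line $q=p'/3$, so it suffices to prove the extension bound on that line.

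\emph{Step 2 (Endpoint $L^4(S^1)\to L_4(\mathcal{R}_\theta^2)$).} Write $A:=E^{*}_{\theta}g$. By the noncommutative Plancherel theorem,
\begin{equation*}
\|A\|^4_{L_4(\mathcal{R}_\theta^2)}=\|A^*A\|^2_{L_2(\mathcal{R}_\theta^2)}=\int_{\mathbb{R}^2}\bigl|\widehat{A^*A}(\xi)\bigr|^2\,d\xi.
\end{equation*}
Unfolding $A^*A$ using the Weyl relation $U_\theta(t)U_\theta(s)=e^{i(s,\theta t)/2}U_\theta(t+s)$ and then applying the distributional identity $\tau_\theta(U_\theta(t))=\delta(t)$ yields the explicit formula
\begin{equation*}
\widehat{A^*A}(\xi)=\sum_{\omega\in S^1\cap(S^1-\xi)}\overline{g(\omega+\xi)}\,g(\omega)\,e^{-i(\omega,\theta\xi)/2}\,J(\omega,\xi),
\end{equation*}
where $J(\omega,\xi)$ is the classical co-area Jacobian on the two-point fibre $S^1\cap(S^1-\xi)$. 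Since the quantum twist $e^{-i(\omega,\theta\xi)/2}$ is unimodular, the triangle inequality gives
\begin{equation*}
\bigl|\widehat{A^*A}(\xi)\bigr|\leq(|g|d\sigma*|g|d\sigma)(\xi),
\end{equation*}
and the classical Fefferman-Zygmund $L^2$ convolution estimate $\|hd\sigma*hd\sigma\|_{L^2(\mathbb{R}^2)}\leq C\|h\|^2_{L^4(S^1)}$ implies $\|A\|_{L_4(\mathcal{R}_\theta^2)}\leq C\|g\|_{L^4(S^1)}$.

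\emph{Step 3 (Interpolation).} The trivial endpoint $\|E^{*}_{\theta}g\|_{L_\infty(\mathcal{R}_\theta^2)}\leq\|g\|_{L^1(S^1)}$ follows from the unitarity of the $U_\theta(\omega)$. Complex interpolation of noncommutative $L_p$ spaces between this bound and the Step 2 endpoint produces the entire critical segment $p\in[1,4/3]$ of the line $q=p'/3$; combined with Step 1 this completes the proof.

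The principal obstacle is Step 2: one must verify through a careful manipulation of the two Weyl products in $A^*A$ that the noncommutativity of $\mathcal{R}_\theta^2$ survives only as the position-dependent unimodular phase $e^{-i(\omega,\theta\xi)/2}$, so that the triangle inequality reduces the estimate to the classical two-dimensional convolution bound on $S^1$. The remaining steps are standard Banach-space duality and interpolation of noncommutative $L_p$ spaces in the style of Pisier-Xu.
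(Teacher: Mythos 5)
Your Step 1 (duality) and Step 3 (interpolation scaffold) are sound, and the computation in Step 2 that shows the noncommutative twist appears only as the unimodular phase $e^{-i(\omega,\theta\xi)/2}$ (using antisymmetry of $\theta$ to kill $(\omega,\theta\omega)$) is both correct and the right structural observation. However, the proof has a fatal gap at the heart of Step 2: the ``classical Fefferman--Zygmund $L^2$ convolution estimate''
$\|h\,d\sigma * h\,d\sigma\|_{L^2(\mathbb{R}^2)} \leq C\|h\|_{L^4(S^1)}^2$
is false. Taking $h\equiv1$, the left side is $\|d\sigma*d\sigma\|_{L^2(\mathbb{R}^2)}$; by Plancherel this equals $\|J_0(2\pi|\cdot|)^2\|_{L^2(\mathbb{R}^2)}$, and since $J_0(r)^2\sim r^{-1}$ at infinity, the integral $\int J_0(2\pi r)^4\,r\,dr$ diverges logarithmically. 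Equivalently, the estimate you invoke is (by Plancherel and duality) exactly the endpoint restriction estimate at $p=4/3$, $q=p'/3=4/3$, which is known to fail; this is precisely why Theorem \ref{rt1} is stated with the \emph{strict} inequality $p<4/3$, and it is why the paper's Theorem \ref{res} at $p=4/3$ carries the extra $(\log\delta^{-1})^{1/4}$ loss. Because your interpolation in Step 3 uses the false $(p,q)=(4/3,4/3)$ endpoint as one of its two vertices, the argument cannot be repaired by a small fix; one must instead prove the open range directly.

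That is what the paper does: after the same duality step, it squares $E^*_\theta g$ once, changes variables $\eta=\Gamma(t)-\Gamma(s)$ on the parametrized arc $S^1_+$, applies the Hausdorff--Young inequality (Lemma \ref{HY}) with exponent $r=p'/2>2$ (never $r=2$), and controls the resulting kernel $|t-s|^{1-r'}$ by the Hardy--Littlewood--Sobolev inequality. This is Zygmund's argument, and HLS supplies exactly the gain that your interpolation was supposed to obtain from the (nonexistent) $L^4\to L_4$ endpoint. If you want to retain an interpolation-based presentation, you would instead need to interpolate between the trivial $L^1(S^1)\to L_\infty(\mathcal R_\theta^2)$ bound and a family of \emph{non}-endpoint extension bounds such as $\|E^*_\theta g\|_{L_{p'}}\lesssim\|g\|_{L_{q'}(S^1)}$ for $p'>4$, which brings you back to Zygmund/HLS anyway.
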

The unit circle $S^1$ can be regarded as the limit of the annuli $A_\delta=\{s\in\mathbb{R}^2:\quad 1-\delta<|s|<1+\delta\}$ with $\delta\rightarrow0$. 
For the annulus $A_\delta$, we have the following theorem:
\begin{theorem} \label{res}
 Let $1\leq p< 4/3,q=p^\prime/3$ and $0<\delta<1/2$. Then for all  $x\in\mathcal{S}(\mathcal{R}_\theta^2)$, we have
 \beq\label{rrr}
 \|\chi^\delta \hat{x}\|_{L_q(\mathbb{R}^2)}\leq C_{p,q}\delta ^{1/q}\|x\|_{L_p(\mathcal{R}_\theta^2)},\eeq
 where \beqq \chi^\delta(\xi):=\chi_{(1-\delta,1+\delta)}(|\xi|) .\eeqq
 Moreover, when $p=4/3$
 \beq\label{rrr1}
  \|\chi^\delta \hat{x}\|_{L_{4/3}(\mathbb{R}^2)}\leq C\delta ^{3/4}(\log{\delta^{-1}})^{1/4}\|x\|_{L_{4/3}(\mathcal{R}_\theta^2)}.\eeq
\end{theorem}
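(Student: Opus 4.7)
The plan is to establish the endpoint \eqref{rrr1} at $p = q = 4/3$ first and then deduce the range \eqref{rrr} for $1 \le p < 4/3$ by interpolation against the trivial estimate $\|\chi^\delta \hat x\|_{L_\infty(\mathbb{R}^2)} \le \|x\|_{L_1(\mathcal{R}_\theta^2)}$, which is immediate from Lemma \ref{hy*}. Both endpoints sit on the line $3/p + 1/q = 3$, and complex interpolation along this line produces an intermediate constant of the form $\delta^{1/q}(\log \delta^{-1})^{\theta/4}$ with $\theta = 4/(3q) < 1$; the extra logarithm, which does not appear in \eqref{rrr}, can be removed for $p < 4/3$ by strengthening the endpoint to a Lorentz target $L_{4/3,\infty}$ and then performing real interpolation.

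For \eqref{rrr1} itself, I would first dualize. Because $\chi^\delta \hat x$ is supported in a set of measure $\lesssim \delta$, H\"older's inequality reduces \eqref{rrr1} to
\[
\|\chi^\delta \hat x\|_{L_4(\mathbb{R}^2)} \le C\, \delta^{1/4} (\log \delta^{-1})^{1/4}\, \|x\|_{L_{4/3}(\mathcal{R}_\theta^2)},
\]
and by duality the latter is equivalent to the extension estimate
\[
\|(\chi^\delta g)^\vee\|_{L_4(\mathcal{R}_\theta^2)} \le C\, \delta^{1/4} (\log \delta^{-1})^{1/4}\, \|g\|_{L_{4/3}(\mathbb{R}^2)}.
\]
Setting $h = (\chi^\delta g)^\vee$, so that $\hat h = \chi^\delta g$ is supported in $A_\delta$, the quantum Plancherel theorem gives
\[
\|h\|_{L_4(\mathcal{R}_\theta^2)}^4 = \|h^* h\|_{L_2(\mathcal{R}_\theta^2)}^2 = \|\widehat{h^* h}\|_{L_2(\mathbb{R}^2)}^2,
\]
where $\widehat{h^* h}$ is a $\theta$-twisted convolution of $\widehat{h^*}$ and $\hat h$ arising from the Weyl relation, the twist being a unimodular phase $e^{\frac{i}{2}(\xi,\theta \eta)}$.

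I would then apply a Whitney/C\'ordoba decomposition of $A_\delta$ into $\sim \delta^{-1/2}$ arcs $\{A_\delta^\nu\}$ of angular width $\sqrt\delta$, and correspondingly decompose $h = \sum_\nu h_\nu$ with $\widehat{h_\nu}$ supported on $A_\delta^\nu$. Expanding $h^* h = \sum_{\nu_1, \nu_2} h_{\nu_1}^* h_{\nu_2}$, for a transversal pair $(\nu_1, \nu_2)$ with angular separation $\gtrsim \sqrt \delta$, the twisted convolution $\widehat{h_{\nu_1}^* h_{\nu_2}}$ is supported on a parallelogram of area $\sim \delta$ (the sum map on transversal arcs having Jacobian $\sim \sqrt\delta$), and these parallelograms for distinct transversal pairs are almost disjoint in $\mathbb{R}^2$. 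Combining this almost-orthogonality with the bilinear bound
\[
\|h_{\nu_1}^* h_{\nu_2}\|_{L_2(\mathcal{R}_\theta^2)} \lesssim \delta^{1/2}\, \|h_{\nu_1}\|_{L_4(\mathcal{R}_\theta^2)}\, \|h_{\nu_2}\|_{L_4(\mathcal{R}_\theta^2)}
\]
and summing via Cauchy--Schwarz yields the desired estimate; the $(\log \delta^{-1})^{1/2}$ factor arises from the $\sim \delta^{-1/2}$ near-diagonal (non-transversal) contributions, which must be handled separately using the trivial noncommutative H\"older bound.

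The main obstacle will be the transversal bilinear $L_2$ estimate for $h_{\nu_1}^* h_{\nu_2}$. In the commutative case ($\theta = 0$) this is a change-of-variables that extracts the transversal Jacobian $\sqrt\delta$. In the quantum setting one must track the Weyl phase through the twisted convolution $\widehat{h_{\nu_1}^* h_{\nu_2}}$ and verify that, despite its bi-variate oscillation, it neither disrupts the transversal geometry on the Fourier side nor destroys the almost-orthogonality among the arc contributions; the necessary Fubini manipulations must then be justified within the noncommutative $L_p$ framework.
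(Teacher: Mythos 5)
Your reduction of the endpoint estimate \eqref{rrr1} via H\"older to
\[
\|\chi^\delta \hat x\|_{L_4(\mathbb{R}^2)} \le C\,\delta^{1/4}(\log\delta^{-1})^{1/4}\,\|x\|_{L_{4/3}(\mathcal{R}_\theta^2)}
\]
is a fatal step: while the implication from this estimate to \eqref{rrr1} is valid (it costs $|A_\delta|^{1/2}\approx\delta^{1/2}$), the $L_4$ estimate itself is \emph{false}, already at $\theta=0$. Take $g=\chi_R$ where $R\subset A_\delta$ is a Knapp rectangle of dimensions $\delta\times\delta^{1/2}$ and $x=U_\theta(g)$. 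Then $\chi^\delta\hat x = g$, so $\|\chi^\delta\hat x\|_{L_4}=|R|^{1/4}\approx\delta^{3/8}$, while (for $\theta=0$) $\|x\|_{L_{4/3}}=\|\check g\|_{L_{4/3}(\mathbb{R}^2)}\approx |R|\cdot|R^*|^{3/4}\approx\delta^{3/8}$. The ratio is $\approx 1$, and cannot be bounded by $\delta^{1/4}(\log\delta^{-1})^{1/4}\to 0$. The problem is that H\"older between $L_{4/3}(A_\delta)$ and $L_4(A_\delta)$ only goes one way, so you have strengthened the target by a full power of $\delta^{1/2}$ that you cannot hope to recover. The correct dualization, as in the paper, keeps the test function in $L_4$: one reduces \eqref{rrr1} to $\|U_\theta(\chi^\delta g)\|_{L_4(\mathcal{R}_\theta^2)}\lesssim\delta^{3/4}(\log\delta^{-1})^{1/4}\|g\|_{L_4(\mathbb{R}^2)}$, whose source space matches the Knapp scaling. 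Since all of your subsequent work (arc decomposition, transversality, bilinear bound, and the accounting of the log) is aimed at the false $L_{4/3}\to L_4$ target, the endpoint argument collapses at the first step, even though the decomposition into $\sim\delta^{-1/2}$ arcs, the separation of near-diagonal and transversal pairs, and the use of Plancherel on $\widehat{h^*h}$ are all in the same spirit as what the paper actually does.

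On the non-endpoint range $1\le p<4/3$: your plan to interpolate the (log-corrected) endpoint against $L_1\to L_\infty$ produces $\delta^{1/q}(\log\delta^{-1})^{\theta/4}$, and you propose to remove the logarithm by upgrading the endpoint to a Lorentz estimate $L_{4/3}\to L_{4/3,\infty}$ and real-interpolating. You have not established that weak-type endpoint, and it does not follow from the $(\log\delta^{-1})^{1/4}$-corrected strong-type bound; this is a genuine gap. The paper avoids this entirely: the non-endpoint case \eqref{rrr} is deduced directly from Theorem \ref{rt1} (the restriction theorem on $S^1$, proved independently by the Zygmund-type dual argument in Section \ref{s3}) together with the dilation invariance supplied by Proposition \ref{trans}, by integrating the circle estimate over radii $r\in(1-\delta,1+\delta)$. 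That route is both shorter and free of log-removal issues, and is worth adopting even if you were to repair the endpoint argument.

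One smaller point: your proposed transversal bilinear bound $\|h_{\nu_1}^*h_{\nu_2}\|_{L_2(\mathcal{R}_\theta^2)}\lesssim\delta^{1/2}\|h_{\nu_1}\|_{L_4}\|h_{\nu_2}\|_{L_4}$ is stated directly in operator norms, but the paper instead passes through Plancherel and proves the estimate on the commutative Fourier side (Lemma \ref{rrrl1}), where the support geometry of $\tilde\chi_\ell^\delta*\chi_{\ell'}^\delta$ can be exploited; the paper's near-diagonal part additionally requires an operator-valued square-function argument in $\mathcal{R}_\theta^2\bar\otimes\mathcal{B}(\ell_2)$, and the far-off-diagonal part relies on the bounded overlap statement from Lemma 5.4 of \cite{Lai21}. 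Your proposal gestures at these ideas (``almost disjointness,'' ``trivial noncommutative H\"older bound'' for the near-diagonal) but does not address how the noncommutativity is actually handled, which, as Remark \ref{diff} notes, is exactly where the new difficulties lie.
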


To establish \eqref{rrr} and \eqref{rrr1}, we come across several difficulties when adapting the classical arguments---Theorem 5.4.7 of \cite{GF}. Roughly speaking, the fact that $|x^*x|^2$ is not necessarily equal to $|xx|^2$ for a general operator $x$ yields some new inequalities and geometric structures that need to be handled. We refer to Remark \ref{diff} for the descriptions of two of them. Fortunately, based on the noncommutative analysis technologies developed in recent years and some new geometric observations,  we overcome these difficulties to provide a proof of the endpoint estimate \eqref{rrr1}. On the other hand, unlike the classical case where the arguments for the endpoint estimate work directly for the non-endpoint ones, we are still unable to show \eqref{rrr} using the proof of \eqref{rrr1} (see Remark \ref{diff} (ii)).
However as in the classical case $\theta=0$, one may check that \eqref{res1} is equivalent to \eqref{rrr} using the change of variable arguments (see the beginning of Section \ref{s4}), and the latter is not trivial on quantum Euclidean spaces (see e.g. Proposition \ref{trans}). There are many ways to establish \eqref{res1} in the classical case. At the moment of writing, we find that the only approach that works in the noncommutative setting is due to Zygmund \cite{Zy1}, see Section \ref{s3} for the adaptation.

When $d\geq3$, we get the  restriction estimate of Tomas-Stein exponents as below:
\begin{theorem}\label{rt2}
Let $d\geq3$ and $\theta$ be a $d\times d $ real  antisymmetric matrix. When $1\leq p\leq\frac{2(d+1)}{d+3}$,  for all $x\in\mathcal{S}(\mathcal{R}_\theta^d)$, we have the following estimate
    \beq\label{res2} \|\hat{x}\|_{L_2(S^{d-1},d\sigma)}\leq C_{d,p}\|x\|_{L_p(\mathcal{R}_\theta^d)}, \eeq
 where $C_{d,p}$ is a constant that depends only on $d,p$.
\end{theorem}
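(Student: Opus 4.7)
The plan is to adapt the classical Stein-Tomas proof to the noncommutative setting via the $TT^*$ method combined with Stein's complex interpolation. By duality, \eqref{res2} is equivalent to showing that the operator
\beqq
Sx := T^*Tx = \int_{S^{d-1}} \hat{x}(\xi)\, U_\theta(\xi)\, d\sigma(\xi)
\eeqq
satisfies $\|Sx\|_{L_{p'}(\mathcal{R}_\theta^d)} \leq C_{d,p}^2\, \|x\|_{L_p(\mathcal{R}_\theta^d)}$, where $Tx=\hat{x}|_{S^{d-1}}$. A direct computation using the Weyl relation shows $U_\theta(\xi)U_\theta(\eta)^* = e^{-i(\eta,\theta\xi)/2}U_\theta(\xi-\eta)$, and because $\theta$ is antisymmetric the only surviving phase is $e^{-i(\eta,\theta\eta)/2}=1$. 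Thus $S$ acts on the Fourier side as multiplication by the distribution $d\sigma$; equivalently, it is the noncommutative Fourier multiplier whose physical-space kernel is an analog of $\widehat{d\sigma}$.

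I would then perform a dyadic decomposition. Choose a smooth radial partition of unity $1=\sum_{j\geq 0}\phi_j$ on $\mathbb{R}^d$ with $\phi_j$ supported in $\{|t|\sim 2^j\}$ for $j\geq 1$ and $\phi_0$ compactly supported near the origin, and set $K_j=\phi_j\cdot \widehat{d\sigma}$, so that $\widehat{d\sigma}=\sum_j K_j$ as tempered distributions. Correspondingly $S=\sum_j S_j$, where $S_j$ is the Fourier multiplier with classical symbol $\hat{K}_j$. For each $S_j$ the task is to prove two endpoint bounds: first, an $L_1(\mathcal{R}_\theta^d)\to \mathcal{R}_\theta^d$ estimate with norm $\lesssim 2^{-j(d-1)/2}$, using the stationary-phase bound $|\widehat{d\sigma}(t)|\lesssim |t|^{-(d-1)/2}$ together with a noncommutative Young-type inequality for convolution by a bounded classical function; second, an $L_2(\mathcal{R}_\theta^d)\to L_2(\mathcal{R}_\theta^d)$ estimate with norm $\lesssim 2^j$, obtained from Plancherel on $\mathcal{R}_\theta^d$ (Lemma \ref{hy*}) which reduces matters to the elementary classical estimate $\|\hat{K}_j\|_{L_\infty(\mathbb{R}^d)}\lesssim 2^j$, computed by convolving the scaled bump $2^{jd}\hat{\phi}(2^j\cdot)$ with $d\sigma$ and using that a ball of radius $2^{-j}$ meets $S^{d-1}$ in measure $\lesssim 2^{-j(d-1)}$.

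Straightforward Riesz-Thorin interpolation at the Tomas-Stein exponent $p=2(d+1)/(d+3)$ yields $\|S_j\|_{L_p\to L_{p'}}\lesssim 1$, which fails to sum over $j$. To reach the endpoint I would apply Stein's complex interpolation theorem to an analytic family $\{S^z\}$ derived from the holomorphic continuation of $(1-|\xi|^2)_+^{z}/\Gamma(z+1)$, so that $S^0$ is (a constant multiple of) our operator. The $L_1\to L_\infty$ bounds at $\mathrm{Re}\, z$ large and the $L_2\to L_2$ bounds at $\mathrm{Re}\, z$ small, combined through the Stein-Hirschman theorem and the fact that the noncommutative $L_p(\mathcal{R}_\theta^d)$ scale is compatible with complex interpolation, then yield the desired $L_p\to L_{p'}$ estimate at $z=0$.

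The main obstacle will be setting up the noncommutative convolution framework rigorously and proving the $L_1\to\mathcal{R}_\theta^d$ estimate in (i) with a constant independent of $\theta$. This requires interpreting $S_j$ as a Bochner integral of the $\mathbb{R}^d$-action $\alpha_t$ on $\mathcal{R}_\theta^d$ defined by $\alpha_t(U_\theta(\xi))=e^{i\xi\cdot t}U_\theta(\xi)$, verifying that $\alpha_t$ is a trace-preserving $*$-automorphism, and then deducing the Young-type bound $\|x*_\theta K_j\|_{\mathcal{R}_\theta^d}\leq \|K_j\|_\infty\|x\|_{L_1(\mathcal{R}_\theta^d)}$. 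Secondary technical issues are the distributional nature of the singular symbol $d\sigma$ (handled by initially working with Schwartz elements of $\mathcal{R}_\theta^d$ and approximating) and the verification that Stein's interpolation applies on the noncommutative scale, which ultimately rests on the complex interpolation theory of Pisier-Xu for noncommutative $L_p$.
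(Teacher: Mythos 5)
Your proposal follows essentially the same strategy as the paper: reduce by duality/$TT^*$, dyadically decompose $\check{d\sigma}$, prove an $L_1(\mathcal{R}_\theta^d)\to L_\infty(\mathcal{R}_\theta^d)$ bound from the stationary-phase decay $|\check{d\sigma}(t)|\lesssim|t|^{-(d-1)/2}$ and an $L_2\to L_2$ bound from Plancherel, sum the dyadic pieces for $p<\tfrac{2(d+1)}{d+3}$, and invoke Stein's analytic interpolation (built from the complex power of the distance to the sphere) for the endpoint. The analytic family you name, $(1-|\xi|^2)_+^z/\Gamma(z+1)$, is only superficially different from the paper's choice, which localizes to a patch near the north pole and uses $(\xi_d-\varphi(\xi'))_+^{z-1}/\Gamma(z)$; both are standard realizations of the same continuation of $d\sigma$, modulo a shift in the $z$-parameter.

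The one step that is not yet a proof is the $L_1(\mathcal{R}_\theta^d)\to L_\infty(\mathcal{R}_\theta^d)$ estimate. You propose to write $S_j$ as a Bochner integral $\int K_j(t)\,\alpha_t(\cdot)\,dt$ over the translation action $\alpha_t$ on $\mathcal{R}_\theta^d$, verify that $\alpha_t$ is a trace-preserving $*$-automorphism, and ``then deduce'' the bound $\|S_j x\|_\infty\lesssim\|K_j\|_\infty\|x\|_1$. This does not follow. Trace-preserving automorphisms give diagonal estimates $\|S_j\|_{L_p\to L_p}\leq\|K_j\|_{L_1}$ for every $p$, but the off-diagonal smoothing bound $L_1\to L_\infty$ with the \emph{sup} norm of the kernel is a genuinely different statement; in the commutative case it is Young's inequality, and the proof relies on pointwise evaluation of the convolution, which has no direct analogue for operators. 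The paper handles this with Lemma \ref{1-0}: one realizes the Fourier multiplier as $T_\psi(x)=(id_{\mathcal{R}_\theta^d}\otimes\tau_\theta)\bigl((id_{\mathcal{R}_\theta^d}\otimes x)\,\pi_\theta(\check{\psi})\bigr)$, where $\pi_\theta\colon L_\infty(\mathbb{R}^d)\to\mathcal{R}_\theta^d\bar\otimes(\mathcal{R}_\theta^d)_{op}$ is the normal $*$-homomorphism determined by $\pi_\theta(\exp_t)=U_\theta(t)\otimes U_\theta(t)^*$, and then invokes the operator-space tensor norm estimate (Pisier, Theorem 2.5.2) to get $\|T_\psi\|_{L_1\to L_\infty}\leq\|\pi_\theta(\check\psi)\|\leq\|\check\psi\|_\infty$. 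Without this (or an equivalent mechanism), your step (i) remains a gap; once it is supplied, the rest of your argument goes through as in the paper.
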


A standard method (see e.g. Theorem 5.4.5 of \cite{GF}) to prove Theorem \ref{rt2} in classical Euclidean space $\mathbb{R}^d$ is to split the $d$-dimensional spatial variable into two lower dimensional variables. It should be pointed out that this method is not available for general quantum Euclidean spaces. However, we find that the method of analytic interpolation developed by Stein and Tomas \cite{To2} is feasible, see Section \ref{s5} for more details.

In the case $\theta=0$, the Fourier restriction conjecture is one of the most important problems and is enormously influential in harmonic analysis, geometric measure theory and partial differential equations. In fact, the study of the Fourier restriction conjecture is quite related to the Bochner-Riesz conjecture, the Kakeya conjecture, local smoothing problems, Strichartz estimates and Schr\"odinger maximal inequalities etc. (see e.g. \cite{De, Fe2, HZ,Ro, Str, Tao}). In the subsequent works, we will see how to apply Theorem \ref{rt1} and \ref{rt2} and their proof to deal with some problems related to the local smoothing phenomenon and Schr\"odinger maximal inequalities on quantum Euclidean spaces. Moreover, in classical harmonic analysis, after Bourgain's  pioneering work \cite{bo} in 1990s, which first broke up the Tomas-Stein exponents, numerous great progresses have been made around the restriction problem in recent years (see e.g. \cite{BD, Guth, HZ,Tao03,Wang}). It is natural to ask whether one can extend these results to the noncommutative case.
These extensions seem quite challenging, since all the methods to improve the Tomas-Stein exponents are highly dependent on the geometric structures of $\mathbb{R}^d$, which are unavailable on the noncommutative manifold $\mathcal{R}_\theta^d$.



The rest of this paper is organized as follows. In Section \ref{s2}, we present definitions, notions and notation mentioned above. In Sections \ref{s3} and \ref{s4}, we prove Theorems \ref{rt1} and \ref{res} respectively. In Section \ref{s5}, based on the Young-type inequalities on $\mathcal{R}_\theta^d$, we combine $TT^*$ method and Stein's analytic interpolation theorem to prove Theorem \ref{rt2}.

 \textbf{Notation:}  In what follows, we write
$A\lesssim_\alpha B$ if $A\le C_\alpha B$ for some constant $C_\alpha>0$ only depending on the index $\alpha$, and we write $A\thickapprox B$ to mean that $A\lesssim B$ and $B \lesssim A$.
For a function $f$, we set $\tilde{f}(\cdot):=f(-\cdot)$. Let $\mathbb{H}$ be a Hilbert space, denote the inner product by $\langle\cdot, \cdot\rangle_\mathbb{H}$, which is linear in the second variable and conjugate linear in the first variable.

\section{Preliminaries and some lemmas}\label{s2}
\subsection{Noncommutative $L_p$ spaces}
Let $\mathcal{M}$ be a semifinite von Neumann algebra equipped with a normal semifinite faithful trace ($n.s.f.$ in short) $\tau$. The support of $x\in\mathcal{M}$ is the least projection such that $ex=x=xe$, denoted by $s(x)$. Let $\mathcal{S}_+(\mathcal{M})$ be the set of all $x\in\mathcal{M}_+$ such that $\tau(s(x))<\infty$, and $\mathcal{S}(\mathcal{M})$ be the linear span of $\mathcal{S}_+(\mathcal{M})$. For $1\leq p<\infty$, $x\in\mathcal{S}(\mathcal{M})$, we define \beqq \|x\|_p=(\tau(|x|^p))^{1/p},\eeqq
where $|x|=(x^*x)^{1/2}$ is the modulus of $x$. The quantity $\|\cdot\|_p$ is a norm, and thus $(\mathcal{S}(\mathcal{M}),\|\cdot\|_p)$ forms a normed vector space. We then denote the completion of $(\mathcal{S}(\mathcal{M}),\|\cdot\|_p)$ by $L_p(\mathcal{M})$, which is called the noncommutative $L_p$ space associated to $(\mathcal{M},\tau)$. For $p=\infty$, we set $L_\infty(\mathcal{M})=\mathcal{M}$, and $\|x\|_\infty:=\|x\|_\mathcal{M}$. Like classical $L_p$ spaces, the noncommutative $L_p$ spaces enjoy the basic properties such as the duality, the interpolation etc.. For more information on noncommutative $L_p$ spaces, we refer to \cite{PX03}.
\subsection{Quantum Euclidean spaces }
 Let $\theta$ be a $d\times d $ real  antisymmetric matrix and $t\in\mathbb{R}^d$, we define the unitary
operator on $L_2(\mathbb{R}^d)$:
\beq\label{wll}
		(U_\theta(t)f)(r):=e^{-\frac i2(t,\theta r)}f(r-t),\quad f\in{L_2(\mathbb{R}^d)}, r\in\mathbb{R}^d.
	\eeq
It is easy to verify that the family $\{U_\theta(t)\}_{t\in\mathbb{R}^d}$ is strongly continuous. For $r,s,t\in\mathbb{R}^d, f\in{L_2(\mathbb{R}^d)}$, we have
\begin{align*}
(U_\theta(t)U_\theta(s)f)(r)&=U_\theta(t)(e^{-\frac i2(s,\theta r)}f(r-s))\\
 & =e^{-\frac i2(t,\theta r)}e^{-\frac i2(s,\theta (r-t))}f(r-s-t)\\
 &=(e^{\frac i2(s,\theta t)}U_\theta(t+s)f)(r).
\end{align*}
Hence we have the Weyl relation $U_\theta(t)U_\theta(s)=e^{\frac i2(s,\theta t)}U_\theta(t+s)$ and $U_{\theta}^*(t)=U_\theta(-t)$.
We call the von Neumann subalgebra of $\mathcal{B}(L_2(\mathbb{R}^d))$ generated by $\{U_\theta(t)\}_{t\in\mathbb{R}^d}$ quantum Euclidean spaces, which is denoted   by $\mathcal{R}_\theta^d $. We refer the reader to e.g. \cite{GJP, LSZ, Xiong1} for more information on  $\mathcal{R}_\theta^d $.
\begin{remark}{\rm
  In the case $\theta=0$, $\mathcal{R}_0^d $ is the von Neumann algebra generated by the unitary group of translations on $\mathbb{R}^d$, which is
  $*$-isomorphic to $L_\infty(\mathbb{R}^d)$.}
\end{remark}

 We will introduce a map from $L_1(\mathbb{R}^d)$ to $\mathcal{R}_\theta^d $ which is still denoted by $U_\theta$  as below:
Let $f \in L_1(\mathbb{R}^d)$, one defines $U_\theta(f): L_2(\mathbb{R}^d)\rightarrow L_2(\mathbb{R}^d)$ as
\begin{equation}\label{defU}
 U_\theta(f)( g):=\int_{\mathbb{R}^d}f(t)(U_\theta(t)g)dt
\end{equation}
for $g\in L_2(\mathbb{R}^d)$.
This $L_2(\mathbb{R}^d)$-valued integral is convergent in the Bochner sense. As in \cite{Xiong1}, one can see that $U_\theta(f)\in \mathcal{R}_\theta^d$ and $U_\theta$ is injective.
The image of $\mathcal{S}(\mathbb{R}^d)$, the Schwartz class on $\mathbb R^d$, under the map $U_\theta$ is called the class of Schwartz functions on $\mathcal{R}_\theta^d $ :
$$ \mathcal{S}(\mathcal{R}_\theta^d):=\{x\in\mathcal{R}_\theta^d:\quad x=U_\theta(f), \text{\:for\: some\;} f\in\mathcal{S}(\mathbb{R}^d)\}.$$
Then $U_\theta$ is a bijection from $\mathcal{S}(\mathbb{R}^d)$ to $ \mathcal{S}(\mathcal{R}_\theta^d)$,
and thus $\mathcal{S}(\mathcal{R}_\theta^d)$ is a Fr\'{e}chet topological space equipped with the Fr\'{e}chet topology induced by $U_\theta$. We denote the space of continuous linear functionals on $\mathcal{S}(\mathcal{R}_\theta^d)$ as $\mathcal{S}^\prime(\mathcal{R}_\theta^d)$, then $U_\theta$ extends to a bijection from $\mathcal{S}^\prime(\mathbb{R}^d)$  to $\mathcal{S}^\prime(\mathcal{R}_\theta^d)$: for $f\in\mathcal{S}^\prime(\mathbb{R}^d)$,
\beq (U_\theta(f),U_\theta(g)):=(f, \tilde{g}), \quad \text{for\:all\;}g\in \mathcal{S}(\mathbb{R}^d).\eeq
If $x\in \mathcal{S}(\mathcal{R}_\theta^d)$ is given by $x=U_\theta(f)$ for $f\in \mathcal{S}(\mathbb{R}^d)$, we define $\tau_\theta(x):=f(0)$, then $\tau_\theta$ extends to a $n.s.f.$ trace on $\mathcal{R}_\theta^d$. The associated noncommutative $L_p$ space is denoted by $L_p(\mathcal{R}_\theta^d)$. The space $ \mathcal{S}(\mathcal{R}_\theta^d)$ is dense in $L_p(\mathcal{R}_\theta^d)$ for $1\leq p<\infty$ with respect to the norm, and dense in $L_\infty(\mathcal{R}_\theta^d)$ in the weak* topology. See \cite{GJP, Xiong1} for more information.

\begin{remark}
{\rm
(i). For $t\in\mathbb R^d$, $U_\theta(t)$ plays the role of $\exp_t(\cdot)=e^{2\pi i\langle t,\cdot\rangle}$ on $\mathbb R^d$. Then for $f\in\mathcal{S}(\mathbb{R}^d)$, $U_\theta(f)$ can be viewed as the inverse Fourier transform of $f$ on $\mathbb R^d$ denoted by $\check{f}$ and $\tau_\theta(U_\theta(f))$ can be regarded as the Lebesgue integral of $\check{f}$.

(ii). For $x\in L_1(\mathcal{R}_\theta^d)+L_\infty(\mathcal{R}_\theta^d)$, $x$ can be embedded into $\mathcal{S}^\prime(\mathcal{R}_\theta^d)$ by
 \beqq (x,y)=\tau_\theta(xy) \quad \text{for\: all\;} y\in \mathcal{S}(\mathcal{R}_\theta^d).\eeqq

}
\end{remark}
\begin{lemma}\label{f*g}
    For $f,g\in\mathcal{S}(\mathbb{R}^d)$, one has
    \beqq U_\theta(f)^*=U_\theta(\overline{\tilde{f}})\eeqq and\beqq U_\theta(f)U_\theta(g)=U_\theta(f*_\theta g),\eeqq where
    $f*_\theta g(s)=\int_{\mathbb{R}^d}f(t)g(s-t)e^{\frac{i}{2}(s,\theta t)}dt$.
  \end{lemma}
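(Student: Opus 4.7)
The plan is to directly compute the adjoint and the product by unrolling the Bochner integral definition \eqref{defU}. For the first identity, I would fix $g,h \in L_2(\mathbb{R}^d)$ and expand $\langle U_\theta(f)g, h\rangle$. Pulling $f(t)$ out of the inner product picks up a complex conjugate (since the inner product is conjugate-linear in the first variable), so the expression becomes $\int_{\mathbb{R}^d}\overline{f(t)}\,\langle U_\theta(t)g, h\rangle\,dt$. Using $U_\theta(t)^*=U_\theta(-t)$, which was established from the Weyl relation just above the statement, this rewrites as $\int \overline{f(t)}\,\langle g, U_\theta(-t)h\rangle\,dt$. After the change of variable $t\mapsto -t$ and reabsorbing the scalar $\overline{f(-t)}$ inside the $L_2$-valued integral (linearity in the second slot), one obtains $\langle g, U_\theta(\overline{\tilde f})h\rangle$, and hence $U_\theta(f)^*=U_\theta(\overline{\tilde f})$.

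For the product identity, I would apply $U_\theta(f)U_\theta(g)$ to an arbitrary $h\in L_2(\mathbb{R}^d)$, fold both Bochner integrals into a double integral, and invoke the Weyl relation to produce
\[
U_\theta(f)U_\theta(g)h=\int_{\mathbb{R}^d}\!\!\int_{\mathbb{R}^d} f(t)g(s)\,e^{\frac{i}{2}(s,\theta t)}\,U_\theta(t+s)h\,dt\,ds.
\]
The change of variable $u=t+s$ (for fixed $t$) recasts this as an integral against $U_\theta(u)h$ whose scalar kernel is $\int f(t)g(u-t)e^{\frac{i}{2}(u-t,\theta t)}\,dt$. The algebraic step that makes $*_\theta$ appear in precisely the stated form is the antisymmetry of $\theta$: since $(t,\theta t)=0$, one has $(u-t,\theta t)=(u,\theta t)$, so the kernel collapses to $(f*_\theta g)(u)$ and we recover $U_\theta(f*_\theta g)h$.

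The only nontrivial bookkeeping is the justification of Fubini for the vector-valued integrals and of interchanging the scalar integral with the inner product; both reduce to absolute integrability, which follows from $f,g\in\mathcal{S}(\mathbb{R}^d)$ together with the fact that each $U_\theta(t)$ is an isometry on $L_2(\mathbb{R}^d)$. Beyond this, the proof is essentially symbolic, and I do not anticipate any genuine obstacle — the antisymmetry of $\theta$ is the single structural input that forces the twisted convolution to take the exact form stated in the lemma.
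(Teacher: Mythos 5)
Your proof is correct and follows essentially the same computation as the paper's: adjoint from $U_\theta(t)^*=U_\theta(-t)$ plus the change of variable $t\mapsto -t$, and the product from the Weyl relation plus the substitution $u=t+s$. The only differences are presentational — you verify the adjoint identity weakly by pairing against $L_2$ vectors rather than taking the adjoint of the Bochner integral directly, and you make explicit the use of antisymmetry of $\theta$ (i.e.\ $(t,\theta t)=0$, so $(u-t,\theta t)=(u,\theta t)$), a step the paper applies silently when it writes $e^{\frac{i}{2}(s,\theta t)}$ after the substitution.
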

  \begin{proof}
  Note that $U_\theta(f)^*=\int_{\mathbb{R}^d}\overline{f(t)}U_\theta(t)^*dt=\int_{\mathbb{R}^d}\overline{f(t)}U_\theta(-t)dt=\int_{\mathbb{R}^d}\overline{f(-t)}U_\theta(t)dt$,
  and
    \begin{align*}
      U_\theta(f)U_\theta(g) & = \int_{\mathbb{R}^d}f(t)U_\theta(t)dt\int_{\mathbb{R}^d}g(s)U_\theta(s)ds \\
    &=\int_{\mathbb{R}^d}\int_{\mathbb{R}^d}f(t)g(s)e^{\frac{i}{2}(s,\theta t)}U_\theta(s+t)dtds\\
    &=\int_{\mathbb{R}^d}\left(\int_{\mathbb{R}^d}f(t)g(s-t)e^{\frac{i}{2}(s,\theta t)}dt\right)U_\theta(s)ds.
    \end{align*}
    Thus we get the desired equalities. 
  \end{proof}

 The Fourier transform on quantum Euclidean space is defined as $\hat{x}(\xi):=\tau_\theta(xU_\theta(\xi)^*)$ for $x\in \mathcal{S}(\mathcal{R}_\theta^d)$. It may be viewed as the inverse map of $U_\theta$. Indeed, if $x=U_\theta(f)\in \mathcal{S}(\mathcal{R}_\theta^d)$ for some $f\in\mathcal{S}(\mathbb{R}^d)$, as argued in Lemma \ref{f*g}, we have $\hat{x}=f$ pointwise.
 Based on Lemma \ref{f*g}, one can get the analogue of the Plancherel theorem on quantum Euclidean spaces. Moreover, we also have the Hausdorff-Young inequalities. 
 \begin{lemma}\label{hy*}
    Let $x\in\mathcal{S}(\mathcal{R}_\theta^d)$, we have
\beq \label{hy3}
\|\hat{x}\|_{L_2(\mathbb{R}^d)}=\|x\|_{L_{2}(\mathcal{R}_\theta^d)};\eeq
and  for $1\leq p<2$, \beq \label{hy4}
\|\hat{x}\|_{L_{p^\prime}(\mathbb{R}^d)}\leq\|x\|_{L_{p}(\mathcal{R}_\theta^d)}.\eeq
Thus the Fourier transform $\hat{}$ can extend to a contraction from $L_p(\mathcal{R}_\theta^d)$ ($1\leq p\leq2$) to $L_{p^\prime}(\mathbb{R}^d)$ and moreover an isometry on $L_2(\mathcal{R}_\theta^d)$.
  \end{lemma}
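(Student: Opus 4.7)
The plan is to prove the two endpoint cases $p=1$ and $p=2$ by direct computation from the definitions, then use complex interpolation for intermediate $p$ and density to extend the resulting bound to all of $L_p(\mathcal{R}_\theta^d)$. Throughout, I would reduce to the case $x=U_\theta(f)$ for some $f\in\mathcal{S}(\mathbb{R}^d)$, and the discussion preceding the lemma already identifies $\hat{x}=f$ pointwise.

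For the $p=1$ endpoint, the unitarity of $U_\theta(\xi)$ combined with tracial duality $|\tau_\theta(yz)|\leq\|y\|_1\|z\|_\infty$ gives
\[
|\hat{x}(\xi)| = |\tau_\theta(xU_\theta(\xi)^*)| \leq \|x\|_1\,\|U_\theta(\xi)\|_\infty = \|x\|_1,
\]
so $\|\hat{x}\|_\infty \leq \|x\|_1$. For the Plancherel identity \eqref{hy3}, Lemma \ref{f*g} yields $x^*x = U_\theta(\overline{\tilde{f}} *_\theta f)$, and then the defining property $\tau_\theta(U_\theta(g)) = g(0)$ gives
\[
\|x\|_2^2 = \tau_\theta(x^*x) = (\overline{\tilde{f}} *_\theta f)(0) = \int_{\mathbb{R}^d}\overline{f(-t)}f(-t)\,dt = \|f\|_2^2 = \|\hat{x}\|_{L_2(\mathbb{R}^d)}^2,
\]
since the twisted phase $e^{\frac{i}{2}(s,\theta t)}$ trivializes at $s=0$ and the twisted convolution collapses to the ordinary $L_2$-inner product.

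For $1<p<2$, I would apply the Riesz--Thorin-type complex interpolation theorem for noncommutative $L_p$-spaces to the linear map $x\mapsto \hat{x}$, using the two endpoints $L_1(\mathcal{R}_\theta^d)\to L_\infty(\mathbb{R}^d)$ and $L_2(\mathcal{R}_\theta^d)\to L_2(\mathbb{R}^d)$ with constant $1$ just established. This yields \eqref{hy4} on $\mathcal{S}(\mathcal{R}_\theta^d)$, which then extends by continuity to all of $L_p(\mathcal{R}_\theta^d)$ since $\mathcal{S}(\mathcal{R}_\theta^d)$ is dense for $1\leq p<\infty$. No serious obstacle is anticipated: both endpoint verifications reduce to one-line computations using only the unitarity of $U_\theta(\xi)$ and the definitions of $\tau_\theta$ and $*_\theta$, while the interpolation and density steps are standard in the theory of noncommutative $L_p$-spaces.
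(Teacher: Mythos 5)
Your proof is correct and follows essentially the same route as the paper: verify $p=1$ via the noncommutative H\"older inequality and unitarity of $U_\theta(\xi)$, verify $p=2$ via Lemma \ref{f*g} and the defining identity $\tau_\theta(U_\theta(g))=g(0)$, then interpolate and extend by density. You merely spell out the two endpoint computations that the paper leaves as one-line remarks.
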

 \begin{proof}
 Suppose that $x=U_\theta(f)$ for $f\in \mathcal{S}(\mathbb{R}^d)$, by Lemma \ref{f*g},
  \begin{align*}
     \|U_\theta(f)\|_{L_{2}(\mathcal{R}_\theta^d)}^2&=\tau_\theta(U_\theta(f)^*U_\theta(f))\\&=\tau_\theta(U_\theta(\overline{\tilde{f}}*_\theta f))\\
   &=(\overline{\tilde{f}}*_\theta f)(0)\\&=\|f\|_{L_2(\mathbb{R}^d)}^2=\|\hat{x}\|_{L_2(\mathbb{R}^d)}.
  \end{align*}
The result for $p=1$ can be deduced easily from the definition of the Fourier transform and the noncommutative H\"{o}lder inequality. Then the desired inequalities for other $p$'s follow from the standard complex interpolation arguments (see e.g. \cite{PX03}).
  Since $\mathcal{S}(\mathcal{R}_\theta^d)$ is dense in $L_p(\mathcal{R}_\theta^d)$, $\hat{}$ extends to a contraction from $L_p(\mathcal{R}_\theta^d)$ to $L_{p^\prime}(\mathbb{R}^d)$.
  \end{proof}

  Moreover, in the case $p=1$, one has the following Riemann-Lebesgue type lemma.
\begin{lemma}\label{RL}
Let $x\in L_1(\mathcal{R}_\theta^d)$, then $\hat{x}$ coincides with a continuous function on $\mathbb{R}^d$ which tends to zero at infinity.
\end{lemma}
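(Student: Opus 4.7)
The plan is to prove Lemma \ref{RL} via a standard density argument combined with the endpoint $p=1$ case of the Hausdorff--Young inequality (Lemma \ref{hy*}). The key observation is that the Fourier transform maps the dense subspace $\mathcal{S}(\mathcal{R}_\theta^d)$ into $C_0(\mathbb{R}^d)$, and that $C_0(\mathbb{R}^d)$ is a closed subspace of $L_\infty(\mathbb{R}^d)$.

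First I would recall the identification already established in the discussion preceding Lemma \ref{hy*}: if $x\in\mathcal{S}(\mathcal{R}_\theta^d)$ is written as $x=U_\theta(f)$ for some $f\in\mathcal{S}(\mathbb{R}^d)$, then $\hat{x}=f$ as a pointwise function. In particular, $\hat{x}\in\mathcal{S}(\mathbb{R}^d)\subset C_0(\mathbb{R}^d)$, where $C_0(\mathbb{R}^d)$ denotes the continuous functions on $\mathbb{R}^d$ vanishing at infinity.

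Second, given an arbitrary $x\in L_1(\mathcal{R}_\theta^d)$, I would use the density of $\mathcal{S}(\mathcal{R}_\theta^d)$ in $L_1(\mathcal{R}_\theta^d)$ (recorded in Section \ref{s2}) to pick a sequence $x_n\in\mathcal{S}(\mathcal{R}_\theta^d)$ with $\|x_n-x\|_{L_1(\mathcal{R}_\theta^d)}\to 0$. By the $p=1$ case of Lemma \ref{hy*}, applied to the Schwartz element $x_n-x_m$, the sequence $\{\hat{x}_n\}$ is Cauchy in $L_\infty(\mathbb{R}^d)$ and converges to $\hat{x}$ in the $L_\infty$-norm, i.e.\ uniformly on $\mathbb{R}^d$.

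Finally, since continuous functions on $\mathbb{R}^d$ that agree almost everywhere in fact agree everywhere, $C_0(\mathbb{R}^d)$ embeds isometrically as a closed subspace of $L_\infty(\mathbb{R}^d)$ (its sup norm coincides with the essential sup norm). Because each $\hat{x}_n$ lies in $C_0(\mathbb{R}^d)$ by Step 1 and the convergence is uniform, the limit has a representative in $C_0(\mathbb{R}^d)$, which is precisely the continuous version of $\hat{x}$ we seek. There is no serious obstacle in this argument; the only point that requires minor care is the identification of the a.e.-defined $L_\infty$-limit with a genuine pointwise continuous function, which is handled by the closedness of $C_0(\mathbb{R}^d)$ inside $L_\infty(\mathbb{R}^d)$.
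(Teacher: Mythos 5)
Your proof is correct and follows essentially the same route as the paper: approximate $x$ in $L_1(\mathcal{R}_\theta^d)$ by Schwartz elements, apply the $p=1$ Hausdorff--Young bound to get uniform convergence of $\hat{x}_n$, and conclude by the closedness of $C_0(\mathbb{R}^d)$ in $L_\infty(\mathbb{R}^d)$.
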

\begin{proof}
  For $x\in L_1(\mathcal{R}_\theta^d)$, by the density of $\mathcal{S}(\mathcal{R}_\theta^d)$ in $L_1(\mathcal{R}_\theta^d)$, we can find a Cauchy sequence $\{x_n\}_{n=1}^\infty\subset \mathcal{S}(\mathcal{R}_\theta^d)$ that converges to $x$ in $L_1(\mathcal{R}_\theta^d)$. By \eqref{hy4}, one may define
   \beqq f(\xi):=\lim_{n\rightarrow\infty}\hat{x}_n(\xi)=\lim_{n\rightarrow\infty}\tau_\theta(x_nU_\theta(\xi)^*).\eeqq   Moreover it is easy to show $\hat{x}_n\rightarrow f$ uniformly since $\{x_n\}_{n=1}^\infty$ is a Cauchy sequence in $L_1(\mathcal{R}_\theta^d)$. Thus $\hat{x}_n\rightarrow f$ in $L_\infty(\mathbb{R}^d)$, and $f$ coincides with $\hat{x}$ which has been defined in
  Lemma \ref{hy*}. Note that $\{\hat{x}_n\}_{n=1}^\infty$ is a sequence of Schwartz functions on $\mathbb{R}^d$ and $\hat{x}_n\rightarrow f$ uniformly, then $f$ is a continuous function on $\mathbb{R}^d$ which tends to zero at infinity.
\end{proof}
\begin{remark}{\rm
 In the case $\theta=0$, $\hat{}$  coincides with the Fourier transform on $\mathbb{R}^d$. In the subsequent sections, we use the notation
   $\hat{}$ to indicate the Fourier transform on $\mathcal{R}_\theta^d$ for all $\theta$ without causing any confusion.
 }
  \end{remark}
  We also have the similar results as in Lemma \ref{hy*} for the inverse Fourier transform $U_\theta$ once we note that for $f\in \mathcal{S}(\mathbb{R}^d)$,
  \begin{equation*}
   \|U_\theta(f)\|_{L_{\infty}(\mathcal{R}_\theta^d)}\leq\|f\|_{L_{1}(\mathbb{R}^d)},
  \end{equation*}
which can be deduced easily from \eqref{defU} and the triangle inequality.
\begin{lemma}\label{HY}
Let $f\in \mathcal{S}(\mathbb{R}^d)$, then we have
\beq \label{hy1}
\|U_\theta(f)\|_{L_{2}(\mathcal{R}_\theta^d)}=\|f\|_{L_2(\mathbb{R}^d)};\eeq
and  for $1\leq p<2$, \beq \label{hy2}
\|U_\theta(f)\|_{L_{p^\prime}(\mathcal{R}_\theta^d)}\leq\|f\|_{L_{p}(\mathbb{R}^d)}.\eeq
Thus $U_\theta$ can extend to a contraction from $L_p(\mathbb{R}^d)$ ($1\leq p\leq2$) to $L_{p^\prime}(\mathcal{R}_\theta^d)$ and moreover an isometry on $L_2(\mathbb{R}^d)$.
\end{lemma}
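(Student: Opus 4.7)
The plan is to follow exactly the strategy used in Lemma \ref{hy*}, establishing the two endpoint cases $p=2$ and $p=1$ and then invoking complex interpolation to fill in the intermediate exponents.

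First I would dispose of the $L_2$ isometry \eqref{hy1}. This is essentially already contained in the proof of Lemma \ref{hy*}: for $f\in\mathcal S(\mathbb R^d)$ one computes, using Lemma \ref{f*g} and the definition $\tau_\theta(U_\theta(g))=g(0)$,
\[
\|U_\theta(f)\|_{L_2(\mathcal R_\theta^d)}^2=\tau_\theta\bigl(U_\theta(f)^*U_\theta(f)\bigr)=\tau_\theta\bigl(U_\theta(\overline{\tilde f}*_\theta f)\bigr)=(\overline{\tilde f}*_\theta f)(0)=\|f\|_{L_2(\mathbb R^d)}^2,
\]
where the last equality uses the fact that the phase $e^{\tfrac{i}{2}(s,\theta t)}$ in the twisted convolution equals $1$ at $s=0$, so the twisted convolution of $\overline{\tilde f}$ and $f$ at the origin reduces to the usual $L_2$ inner product of $f$ with itself.

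Next I would prove the $L_1\to L_\infty$ contraction, which is the endpoint hinted at in the excerpt just before the lemma. The map $U_\theta:L_1(\mathbb R^d)\to\mathcal R_\theta^d$ is defined by the $L_2(\mathbb R^d)$-valued Bochner integral \eqref{defU}, and since each $U_\theta(t)$ is unitary, the standard triangle/Bochner inequality gives
\[
\|U_\theta(f)\|_{L_\infty(\mathcal R_\theta^d)}\le\int_{\mathbb R^d}|f(t)|\,\|U_\theta(t)\|_{\mathcal B(L_2(\mathbb R^d))}\,dt=\|f\|_{L_1(\mathbb R^d)}.
\]
This is exactly the inequality already displayed in the paragraph preceding the lemma statement, so I would simply cite it.

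Having the two endpoints, I would then invoke complex interpolation between $L_1(\mathbb R^d)\to L_\infty(\mathcal R_\theta^d)$ and $L_2(\mathbb R^d)\to L_2(\mathcal R_\theta^d)$; since the interpolation couple on the target side is the noncommutative one $(\mathcal R_\theta^d,L_2(\mathcal R_\theta^d))$, one uses the noncommutative complex interpolation theorem (as in \cite{PX03}), whose output is precisely the contraction $U_\theta:L_p(\mathbb R^d)\to L_{p'}(\mathcal R_\theta^d)$ for $1<p<2$. Finally, density of $\mathcal S(\mathbb R^d)$ in $L_p(\mathbb R^d)$ lets $U_\theta$ extend continuously to all of $L_p(\mathbb R^d)$.

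There is really no serious obstacle here; the only point worth a moment's care is making sure the noncommutative complex interpolation is applied with the correct compatible pair on the range side, but this is entirely parallel to what was done for Lemma \ref{hy*} and requires no new ingredient.
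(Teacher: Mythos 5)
Your proof is correct and matches the paper's intended argument exactly: the paper explicitly states that the proof "is similar to that of Lemma \ref{hy*}" and records the $L_1\to L_\infty$ bound $\|U_\theta(f)\|_{L_\infty(\mathcal R_\theta^d)}\le\|f\|_{L_1(\mathbb R^d)}$ in the paragraph immediately preceding the lemma, which together with the $L_2$ twisted-convolution computation and complex interpolation is precisely what you carry out.
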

The proof is similar to that of Lemma \ref{hy*}, we omit the details (see e.g. \cite{Xiong1}).
\section{The full restriction theorem on $\mathcal{R}_\theta^2$}\label{s3}
 Before giving the proof of Theorem \ref{rt1}, we will show that Theorem \ref{rt1} can be reduced to proving \eqref{res1} for an open region $V\subset S^1$ instead of $S^1$. For that purpose, we need the change of variable arguments.
  Let $T$ be an invertible linear transform from $\mathbb{R}^2$ to $\mathbb{R}^2$, then $T$ can be regarded as an invertible real $2\times 2$ matrix.
  We define a map $\Psi_T$ from $L_\infty(\mathcal{R}_\theta^2)$ to $L_\infty(\mathcal{R}_{\theta_T}^2)$ as
  \beqq\Psi_T(U_\theta(s)):=U_{\theta_T}(T^{-1}s),\quad \text{for\: all\;} s\in\mathbb{R}^2.\eeqq
  Here $\theta_T:=T^t\theta T$, $T^t$ is the transpose of $T$. One can check that $\Psi_T$ is a $*$-isomorphism (see e.g. \cite{Xiong1}).
 And we have the following proposition.
  \begin{proposition}\label{trans}
    Let $T$ be an invertible $2\times 2$ real matrix. 
    For $1\leq p\leq\infty$ and $x\in\mathcal{S}(\mathcal{R}_\theta^2)$, we have
    $$\|\Psi_T(x)\|_{L_p(\mathcal{R}_{\theta_T}^2)}=|\det T|^{1/p}\|x\|_{L_p(\mathcal{R}_{\theta}^2)}.$$
  \end{proposition}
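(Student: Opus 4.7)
The plan is to reduce everything to an explicit computation on the Schwartz class, using the Bochner integral representation \eqref{defU} and the defining formula $\tau_\theta(U_\theta(f))=f(0)$.

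First I would compute $\Psi_T$ on a Schwartz element $x=U_\theta(f)$. Since $\Psi_T$ is a $*$-isomorphism of von Neumann algebras it is automatically normal, so it commutes with the Bochner integral
\[
U_\theta(f)=\int_{\mathbb R^2}f(t)\,U_\theta(t)\,dt.
\]
Applying $\Psi_T$ and then changing variables $s=T^{-1}t$ (so $dt=|\det T|\,ds$) yields
\[
\Psi_T(U_\theta(f))=\int_{\mathbb R^2}f(t)\,U_{\theta_T}(T^{-1}t)\,dt=|\det T|\int_{\mathbb R^2}f(Ts)\,U_{\theta_T}(s)\,ds=U_{\theta_T}(g),
\]
where $g(s):=|\det T|\,f(Ts)$. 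In particular $\Psi_T(\mathcal S(\mathcal R_\theta^2))\subset\mathcal S(\mathcal R_{\theta_T}^2)$, and evaluating at the origin gives the key trace identity
\[
\tau_{\theta_T}\!\big(\Psi_T(U_\theta(f))\big)=g(0)=|\det T|\,f(0)=|\det T|\,\tau_\theta(U_\theta(f)).
\]

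Next I would bootstrap from this identity to all $L_p$-norms. Because $\Psi_T$ is a $*$-isomorphism, it preserves positivity, adjoints, and (by continuous functional calculus) arbitrary Borel functions of self-adjoint elements; in particular $|\Psi_T(x)|^p=\Psi_T(|x|^p)$ for every $x\in\mathcal S(\mathcal R_\theta^2)$ and every $p\in[1,\infty)$. Combined with the trace identity applied to the positive element $|x|^p\in \mathcal S(\mathcal R_\theta^2)$, this gives
\[
\|\Psi_T(x)\|_{L_p(\mathcal R_{\theta_T}^2)}^p=\tau_{\theta_T}\!\big(\Psi_T(|x|^p)\big)=|\det T|\,\tau_\theta(|x|^p)=|\det T|\,\|x\|_{L_p(\mathcal R_\theta^2)}^p,
\]
and taking $p$-th roots produces the claimed factor $|\det T|^{1/p}$. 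The case $p=\infty$ follows immediately from the fact that any $*$-isomorphism between von Neumann algebras is isometric for the operator norm, matching the convention $|\det T|^{1/\infty}=1$.

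The one point that requires a moment of care is the interchange of $\Psi_T$ with the Bochner integral defining $U_\theta(f)$; I expect this to be the most technical (though standard) step. Since normal $*$-homomorphisms are weak$^*$-continuous and since the integrand $f(t)U_\theta(t)$ is in $L^1$ in the Bochner sense with values in a bounded subset of $\mathcal R_\theta^2$, the interchange is justified by approximating $f$ by simple functions or, equivalently, by testing against the predual. Once this step is in place, the $L_p$-identity for all $p\in[1,\infty]$ is a direct consequence of the trace scaling law combined with functional calculus.
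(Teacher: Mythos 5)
Your computation of $\Psi_T(U_\theta(f))=U_{\theta_T}(g)$ with $g(s)=|\det T|\,f(Ts)$, and the resulting trace identity $\tau_{\theta_T}(\Psi_T(x))=|\det T|\,\tau_\theta(x)$ on the Schwartz class, are both correct and are exactly the right starting data. The gap is in the assertion that $|x|^p\in\mathcal S(\mathcal R_\theta^2)$ for every $x\in\mathcal S(\mathcal R_\theta^2)$ and every $p\in[1,\infty)$. This is false in general, already in the commutative case $\theta=0$: there $x=U_0(f)$ is multiplication by $\check f$, and $|x|^p$ is multiplication by $|\check f|^p$, which is typically not Schwartz for non-even $p$ (it fails to be smooth at zeros of $\check f$). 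The same obstruction is present for $\theta\neq0$: $\mathcal S(\mathcal R_\theta^2)$ is closed under products and adjoints, so $|x|^2=x^*x$ is Schwartz, but $(x^*x)^{p/2}$ is produced by Borel functional calculus and has no reason to land back in $\mathcal S(\mathcal R_\theta^2)$. Since your trace identity is only established on $\mathcal S(\mathcal R_\theta^2)$, you cannot apply it to $|x|^p$; even the $p=1$ case breaks, because $|x|=(x^*x)^{1/2}$ is generally not Schwartz.

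To salvage your route you would need a separate argument that the normal trace $\tau_{\theta_T}\circ\Psi_T$ on $\mathcal R_\theta^2$ coincides with $|\det T|\,\tau_\theta$ on all of $(\mathcal R_\theta^2)_+$, not just on $\mathcal S(\mathcal R_\theta^2)$; this is plausible via a uniqueness statement for normal semifinite traces agreeing on a dense $*$-ideal, but it is not automatic and is not supplied. The paper sidesteps the issue entirely: it proves the one-sided inequality (the other direction follows via $\Psi_{T^{-1}}$), handles $p=2$ by a direct Plancherel computation with $\|U_\theta(f)\|_2=\|f\|_2$, handles $p=\infty$ by the isometry of $*$-isomorphisms, gets $2\le p\le\infty$ by interpolation, and gets $1\le p\le 2$ by duality together with the pairing identity $\tau_{\theta_T}(\Psi_T(x)y)=|\det T|\,\tau_\theta(x\Psi_{T^{-1}}(y))$, which is exactly your Schwartz-level trace identity in bilinear form. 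That strategy never needs to apply a trace identity to $|x|^p$ and hence avoids the functional-calculus obstruction.
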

  \begin{proof}
    It suffices to show \beq\label{pp1}\|\Psi_T(x)\|_{L_p(\mathcal{R}_{\theta_T}^2)}\leq|\det T|^{1/p}\|x\|_{L_p(\mathcal{R}_{\theta}^2)}\eeq
    for all $x\in\mathcal{S}(\mathcal{R}_\theta^2)$, since we can get the reverse inequality via the map $\Psi_{T^{-1}}$. When $p=2$, for a given  $x\in\mathcal{S}(\mathcal{R}_\theta^2)$, we can find
    $f\in\mathcal{S}(\mathbb{R}^2)$ such that $x=U_\theta(f)$. By Lemma \ref{HY},
    \begin{align*}
     \|\Psi_T(x)\|_{L_2(\mathcal{R}_{\theta_T}^2)}&=|\det T|\|U_{\theta_T}(f(T\cdot))\|_{L_2(\mathcal{R}_{\theta_T}^2)}\\&=|\det T|\|f(T\cdot)\|_{L_2(\mathbb{R}^2)}
      =|\det T|^\frac{1}{2}\|x\|_{L_2(\mathcal{R}_{\theta}^2)}.
    \end{align*}
    When $p=\infty$, $\Psi_T$ is a $*$-isomorphism from $L_\infty(\mathcal{R}_{\theta}^2)$ to $L_\infty(\mathcal{R}_{\theta_T}^2)$, thus $\Psi_T$ is an isometry. Then we get \eqref{pp1} for $2\leq p\leq \infty$ via interpolation.

      When $1\leq p\leq2$,  for a given $x\in\mathcal{S}(\mathcal{R}_\theta^2)$, by duality and the H\"{o}lder inequality,
      \begin{align*}
        \|\Psi_T(x)\|_{L_p(\mathcal{R}_{\theta_T}^2)}&=\sup_{y\in\mathcal{S}(\mathcal{R}_{\theta_T}^2),\|y\|_{p^\prime}=1}|\tau_{\theta_T}(\Psi_T(x)y)|\\&=\sup_{y\in\mathcal{S}(\mathcal{R}_{\theta_T}^2),\|y\|_{p^\prime}=1}
        |\det T||\tau_\theta(x\Psi_{T^{-1}}(y))|\\
        &\leq\sup_{y\in\mathcal{S}(\mathcal{R}_{\theta_T}^2),\|y\|_{p^\prime}=1}|\det T|\|x\|_{L_p(\mathcal{R}_{\theta}^2)}\|\Psi_{T^{-1}}(y)\|_{L_{p^\prime}(\mathcal{R}_{\theta}^2)}\\
        &\leq |\det T|^{1/p}\|x\|_{L_p(\mathcal{R}_{\theta}^2)}.
      \end{align*}
      Here we have used the fact that $\tau_{\theta_T}(\Psi_T(x)(y))=|\det T|\tau_{\theta}(x\Psi_{T^{-1}}(y)).$
      Indeed, suppose that $x=U_\theta(f),y=U_{\theta_T}(g)$ for some $f,g\in\mathcal{S}(\mathbb{R}^2)$. As argued in Lemma \ref{f*g}, we have
    \beqq\tau_{\theta_T}(\Psi_T(x)y)=|\det T|\int_{\mathbb{R}^2}f(Ts)g(-s)ds=\int_{\mathbb{R}^2}f(s)g(-T^{-1}s)ds\eeqq
     and \beqq|\det T|\tau_{\theta}(x\Psi_{T^{-1}}(y))=\int_{\mathbb{R}^2}f(s)g(-T^{-1}s)ds.\eeqq
    \end{proof}
  \begin{proposition}\label{local}
   Let $V$ be an open region of $S^{1}$ and $1\leq p,q\leq\infty$. If the following estimate \beq\label{ress2} \|\hat{x}\|_{L_q(V,d\sigma)}\lesssim_{p,q}\|x\|_{L_p(\mathcal{R}_\theta^2)} \eeq
    holds true for all $2\times 2 $ real  antisymmetric matrix $\theta$ and  all $x\in\mathcal{S}(\mathcal{R}_\theta^2)$, then we have \beq\label{ress1} \|\hat{x}\|_{L_q(S^{1},d\sigma)}\lesssim_{p,q}\|x\|_{L_p(\mathcal{R}_\theta^2)} \eeq
    for all $2\times 2 $ real  antisymmetric matrix $\theta$ and  all $x\in\mathcal{S}(\mathcal{R}_\theta^2)$.
  \end{proposition}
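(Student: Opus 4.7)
The plan is to cover $S^{1}$ by finitely many rotates of the open arc $V$ and transfer each piece back to $V$ using the change-of-variable $*$-isomorphism $\Psi_{T}$ from Proposition \ref{trans}, with rotations $T \in SO(2)$. Since $|\det T| = 1$ for such $T$, Proposition \ref{trans} says $\Psi_{T}$ is an $L_{p}$-isometry, and rotations preserve both $S^{1}$ and the surface measure $d\sigma$, so no Jacobian factors accumulate in a covering argument. The key preliminary is the intertwining identity
\[
\widehat{\Psi_{T}(x)}(\xi) = |\det T|\, \hat{x}(T\xi), \qquad x \in \mathcal{S}(\mathcal{R}_{\theta}^{2}),
\]
valid for any invertible real $2 \times 2$ matrix $T$. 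To verify it, I would write $x = U_{\theta}(f)$ for $f \in \mathcal{S}(\mathbb{R}^{2})$, apply $\Psi_{T}$ termwise to the Bochner integral \eqref{defU}, and use the defining relation $\Psi_{T}(U_{\theta}(s)) = U_{\theta_{T}}(T^{-1}s)$ together with the change of variable $s \mapsto Ts$. This gives $\Psi_{T}(x) = |\det T|\, U_{\theta_{T}}(f(T\cdot))$, and the identity follows from the pointwise relation $\widehat{U_{\theta_{T}}(g)} = g$ noted after Lemma \ref{f*g}.

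By compactness of $S^{1}$ together with openness of $V$, choose finitely many rotations $T_{1}, \ldots, T_{N} \in SO(2)$ such that $S^{1} = \bigcup_{i=1}^{N} T_{i}V$. Splitting the integral over $S^{1}$ according to this cover and substituting $\xi = T_{i}\eta$ on each piece yields
\[
\|\hat{x}\|_{L_{q}(S^{1}, d\sigma)}^{q} \leq \sum_{i=1}^{N} \int_{T_{i}V} |\hat{x}(\xi)|^{q}\, d\sigma(\xi) = \sum_{i=1}^{N} \int_{V} |\widehat{\Psi_{T_{i}}(x)}(\eta)|^{q}\, d\sigma(\eta).
\]
Since $\theta_{T_{i}} = T_{i}^{t}\theta T_{i}$ is again a $2 \times 2$ real antisymmetric matrix, the hypothesis \eqref{ress2} applies to $\Psi_{T_{i}}(x) \in \mathcal{S}(\mathcal{R}_{\theta_{T_{i}}}^{2})$, bounding each term by a constant multiple of $\|\Psi_{T_{i}}(x)\|_{L_{p}(\mathcal{R}_{\theta_{T_{i}}}^{2})}^{q}$, which equals $\|x\|_{L_{p}(\mathcal{R}_{\theta}^{2})}^{q}$ by Proposition \ref{trans}. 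Summing over $i$ gives \eqref{ress1}; the case $q = \infty$ is handled identically with $\sup_{i}$ in place of $\sum_{i}$.

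No serious obstacle is expected here. The only substantive point is the intertwining identity, which just encodes that $\Psi_{T}$ is the operator-algebraic analogue of pullback by $T$ on $\mathbb{R}^{2}$; after that, the argument is a routine finite cover leveraging the $SO(2)$-invariance of $(S^{1}, d\sigma)$ and the norm-preservation provided by Proposition \ref{trans}.
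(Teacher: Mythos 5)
Your proof is correct and follows essentially the same route as the paper's: cover $S^1$ by finitely many rotates $T_iV$ with $T_i \in SO(2)$, transfer to $V$ using rotation-invariance of $d\sigma$, and apply Proposition \ref{trans} with $|\det T_i| = 1$ to match the $L_p$ norms. The only cosmetic difference is that you package the transfer step as an explicit intertwining identity $\widehat{\Psi_T(x)}(\xi) = |\det T|\,\hat{x}(T\xi)$, whereas the paper carries out the equivalent computation $\Psi_{T_i^{-1}}(U_{\theta_{T_i}}(\hat{x}(T_i\cdot))) = x$ inline.
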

  \begin{proof}
   Since $S^{1}$ is compact, we can cover $S^{1}$ with finite open regions $\{T_i(V)\}_{i=1}^n$, where $T_i(V)$ is the rotation of $V$ via a suitable rotation transform $T_i$. Then
    \beq\label{ress3}\|\hat{x}\|_{L_q(S^{1}, d\sigma)}\leq\sum_{i=1}^n\|\hat{x}\|_{L_q(T_i(V),d\sigma)}.\eeq
    Note that the surface measure $d\sigma$ is invariant under rotation and $|\det T_i|=1$, we have $\|\hat{x}\|_{L_q(T_i(V),d\sigma)}=\|\hat{x}(T_i\cdot)\|_{L_q(V,d\sigma)}$. Then for a given $2\times 2 $ real  antisymmetric matrix $\theta$,
    by \eqref{ress2} and Proposition \ref{trans},
    \begin{align*}
      \|\hat{x}\|_{L_q(T_i(V),d\sigma)}&=\|\hat{x}(T_i\cdot)\|_{L_q(V,d\sigma)}\lesssim_{p,q}\|U_{\theta_{T_i}}(\hat{x}(T_i\cdot))\|_{L_p(\mathcal{R}_{\theta_{T_i}}^2)}\\
    &=\|\Psi_{T_i^{-1}}(U_{\theta_{T_i}}(\hat{x}(T_i\cdot)))\|_{L_p(\mathcal{R}_{\theta}^2)}=\|x\|_{L_p(\mathcal{R}_{\theta}^2)},
    \end{align*}
   where in the final equality we have used the fact that $$\Psi_{T_i^{-1}}(U_{\theta_{T_i}}(\hat{x}(T_i\cdot)))=|\det T_i|^{-1}U_{\theta}(\hat{x}(\cdot))=x.$$
  Finally, the desired estimate  \eqref{ress1} follows from \eqref{ress3} .
  \end{proof}
  \begin{remark}{\rm
  It is easy to see that Proposition \ref{trans} and Proposition \ref{local} are also valid in higher dimensions.}
  \end{remark}
  Now we focus on the proof of Theorem \ref{rt1}.
\begin{proof}[Proof of Theorem \ref{rt1}] When $1\leq p< \frac{4}{3}$ and $ q= {p^\prime}/3$, by Proposition \ref{local}, it suffices to show
that for all $x\in\mathcal{S}(\mathcal{R}_\theta^2)$,
    \beq\label{rres} \|\hat{x}\|_{L_q(S_+^1)}\lesssim_{p}\|x\|_{L_p(\mathcal{R}_\theta^2)}, \eeq
   where $S_+^1=\{(t,\sqrt{1-t^2})\in \mathbb{R}^2, t\in (-1/2,1/2)\}$. Fix one $x\in\mathcal{S}(\mathcal{R}_\theta^2)$. By duality and the H\"{o}lder inequality, we have
   \begin{align*}
          \|\hat{x}\|_{L_q(S_+^1)}&=\sup_{g\in C(S_+^1),\|g\|_{L_{q^\prime}(S_+^1)}=1}\left|\int_{S_+^1}\hat{x}(\xi)g(\xi)d\sigma(\xi)\right|\\
   &=\sup_{g\in C(S_+^1),\|g\|_{L_{q^\prime}(S_+^1)}=1}\left|\int_{S_+^1}\tau_\theta(xU_\theta(\xi)^*)g(\xi)d\sigma(\xi)\right|\\
   &=\sup_{g\in C(S_+^1),\|g\|_{L_{q^\prime}(S_+^1)}=1}\left|\tau_\theta(\int_{S_+^1}xU_\theta(\xi)^*g(\xi)d\sigma(\xi))\right|\\
   &\leq \|x\|_{L_p(\mathcal{R}_\theta^2)}\sup_{g\in C(S_+^1),\|g\|_{L_{q^\prime}(S_+^1)}=1}\Big\|\int_{S_+^1}U_\theta(\xi)^*g(\xi)d\sigma(\xi)\Big\|_{L_{p^\prime}(\mathcal{R}_\theta^2)}.
        \end{align*}
  Therefore, it suffices to show that for all $g\in C(S_+^1)$,
  \beq\label{du}\Big\|\int_{S_+^1}U_\theta^*(\xi)g(\xi)d\sigma(\xi)\Big\|_{L_{p^\prime}(\mathcal{R}_\theta^2)}\lesssim_{p}\|g\|_{L_{q^\prime}(S_+^1)}.\eeq
   We define $\gamma:(-1/2,1/2)\rightarrow\mathbb{R}$ as $\gamma(t):=\sqrt{1-t^2}$, and $\Gamma:(-1/2,1/2)\rightarrow\mathbb{R}^2$ as $\Gamma(t):=(t,\gamma(t)).$ Then for a fixed $g\in C(S_+^1)$,
   \begin{align*}
    & \qquad\Big|\int_{S_+^1}U_\theta^*(\xi)g(\xi)d\sigma(\xi)\Big|^2\\&=\Big|\int^{1/2}_{-1/2}U_\theta^*(\Gamma(t))g(\Gamma(t))(1+|\gamma^\prime(t)|^2)^{1/2}d t\Big|^2\\
    &\thickapprox\Big|\int^{1/2}_{-1/2}U_\theta^*(\Gamma(t))g(\Gamma(t))\gamma(t)^{-1}dt\Big|^2\\
    &=\left(\int_{-1/2}^{1/2}U_\theta(\Gamma(t))\bar{g}(\Gamma(t))\gamma(t)^{-1}dt\right)\left(\int_{-1/2}^{1/2}U_\theta^*(\Gamma(s))g(\Gamma(s))\gamma(s)^{-1}ds\right)\\
    &=\int_{-1/2}^{1/2}\int_{-1/2}^{1/2}\bar{g}(\Gamma(t))g(\Gamma(s))e^{-\frac{i}{2}(\Gamma(s),\theta\Gamma( t))}\gamma(t)^{-1}\gamma(s)^{-1}U_\theta(\Gamma(t)-\Gamma(s))dsdt.
   \end{align*}
   Define $T:(-1/2,1/2)\times(-1/2,1/2)\rightarrow \mathbb{R}^2$ as $T(t,s)=\Gamma(t)-\Gamma(s)\triangleq \eta$, it is easy to check that $T$ is injective.
   Let $B$ be the image of $T$, then $T:(-1/2,1/2)\times(-1/2,1/2)\rightarrow B$ is  invertible, and the Jacobian of $T$ is
   $J(t,s)=|\gamma^\prime(t)-\gamma^\prime(s)|\geq c|t-s|$, where $c=\min_{v\in(-1/2,1/2)}|\gamma^{\prime\prime}(v)|$.
   Then we have
  \beqq
    \Big|\int_{S_+^1}U_\theta^*(\xi)g(\xi)d\sigma(\xi)\Big|^2=\int_{\mathbb{R}^2}U_\theta(\eta)F(\eta)d\eta=U_\theta(F),
\eeqq
 where $F(T(t,s))=e^{-\frac{i}{2}(\Gamma(s),\theta\Gamma( t))}\gamma(t)^{-1}\gamma(s)^{-1}\bar{g}(\Gamma(t))g(\Gamma(s))J^{-1}(t,s)\chi_B(T(t,s))$.
  Let $r=p^\prime/2$, then $r>2$.  By Lemma \ref{HY}, \beqq\Big\|\int_{S_+^1}U_\theta^*(\xi)g(\xi)d\sigma(\xi)\Big\|_{L_{p^\prime}(\mathcal{R}_\theta^2)}^2=\|U_\theta(F)\|_{L_{r}(\mathcal{R}_\theta^2)}\leq\|F\|_{r^\prime}.\eeqq
To continue, combing the estimate of $J(t,s)$ and the H\"{o}lder inequality, we have
  \begin{align*}
    \|F\|_{r^\prime}^{r^\prime} &=\int_{-1/2}^{1/2}\int_{-1/2}^{1/2}|\gamma(t)^{-1}\gamma(s)^{-1}\bar{g}(\Gamma(t))g(\Gamma(s))|^{r^\prime}J^{-r^\prime+1}(t,s)dtds\\
     &\lesssim \int_{-1/2}^{1/2}\int_{-1/2}^{1/2}|\gamma(t)^{-1}\gamma(s)^{-1}\bar{g}(\Gamma(t))g(\Gamma(s))|^{r^\prime}|t-s|^{(1-r^\prime)}dtds\\
   &\lesssim\||\gamma(t)^{-1}g(\Gamma(t))|^{r^\prime}\|_{q^\prime/r^\prime}\left\|\int_{-1/2}^{1/2}|\gamma(s)^{-1}g(\Gamma(s))|^{r^\prime}|t-s|^{(1-r^\prime)}ds\right\|_{(q^\prime/r^\prime)^\prime}\\
    &\lesssim\||\gamma(t)^{-1}g(\Gamma(t))|^{r^\prime}\|_{q^\prime/r^\prime}^2\\
     &\lesssim\left(\int_{-1/2}^{1/2}|g(\Gamma(t))|^{q^\prime}\gamma(t)^{-1}dt\right)^{2r^\prime/q^\prime}=\|g\|_{L_{q^\prime}(S_+^1)}^{2r^\prime}.
  \end{align*}
Here we have used the fact $\gamma(t)^{-1}\thickapprox1$ when $t\in(-1/2,1/2)$  and the estimate
  \begin{align*}
   \left\|\int_{-1/2}^{1/2}|\gamma(s)^{-1}g(\Gamma(s))|^{r^\prime}|t-s|^{(1-r^\prime)}ds\right\|_{(q^\prime/r^\prime)^\prime}&\lesssim\||\gamma(t)^{-1}g(\Gamma(t))|^{r^\prime}\|_{q^\prime/r^\prime},
  \end{align*}
  which follows from  the Hardy-Littlewood-Sobolev inequality since $1+\frac{1}{(q^\prime/r^\prime)^\prime}= r^\prime-1+\frac{1}{q^\prime/r^\prime}$.
Therefore, the proof of Theorem \ref{rt1} is completed.
 \end{proof}
 \section{The proof of Theorem \ref{res} }\label{s4}

We firstly give the proof of Theorem \ref{res} in the non-endpoint case $1\leq p<4/3$ and $q=\frac{p^\prime}{3}$.
\begin{proof}
Let $x\in\mathcal{S}(\mathcal{R}_\theta^2)$. Recalling that $\chi^\delta(\xi)=\chi_{(1-\delta,1+\delta)}(|\xi|)$, we have
\begin{align*}
\|\chi^\delta \hat{x}\|_{L_q(\mathbb{R}^2)}^q&=\int_{1-\delta\leq|\xi|\leq1+\delta}|\hat{x}(\xi)|^qd\xi \\
  &=\int_{1-\delta}^{1+\delta}\int_{rS^1}|\hat{x}(\xi)|^qd\sigma(\xi)dr\\&=\int_{1-\delta}^{1+\delta}r\int_{S^1}|\hat{x}(r\xi)|^qd\sigma(\xi)dr.
\end{align*}
Let $T$ be the dilation map on $\mathbb{R}^2$, that is $T(\xi)=r^{-1}\xi$ for $\xi\in \mathbb{R}^2$.
Since $1\leq p<4/3$ and $q=\frac{p^\prime}{3}$, by Theorem \ref{rt1} and applying Proposition \ref{trans} to $T$,
\begin{align*}
 \|\chi^\delta \hat{x}\|_{L_q(\mathbb{R}^2)}^q &=\int_{1-\delta}^{1+\delta}r\int_{S^1}|\hat{x}(r\xi)|^qd\sigma(\xi)dr \\
 &\lesssim \int_{1-\delta}^{1+\delta}r \|U_{\theta_{T^{-1}}}(\hat{x}(r\cdot)\|_{L_{p}(\mathcal{R}_{\theta_{T^{-1}}}^2)}^qdr\\
 &=\int_{1-\delta}^{1+\delta}r^{1+\frac{2q}{p}} \|\Psi_{T}(U_{\theta_{T^{-1}}}(\hat{x}(r\cdot))\|_{L_{p}(\mathcal{R}_{\theta}^2)}^qdr\\
 &=\int_{1-\delta}^{1+\delta}r^{1+\frac{2q}{p}-2q} \|x\|_{L_{p}(\mathcal{R}_{\theta}^2)}^qdr\\
 &=\int_{1-\delta}^{1+\delta}r^{\frac{1}{3}} \|x\|_{L_{p}(\mathcal{R}_{\theta}^2)}^qdr\\
 &\lesssim \delta \|x\|_{L_{p}(\mathcal{R}_\theta^2)}^q,
\end{align*}
where the final inequality follows from the fact that $\delta\in(0,1/2)$.
The proof of the first part of Theorem \ref{res} is now complete.
\end{proof}
\begin{remark}{\rm
  Actually, Theorem \ref{res} can conversely imply Theorem \ref{rt1} once we note that
  \begin{align*}
     \|\hat{x}\|_{L_q(S^1)}^q&=\lim_{\delta\rightarrow0}\frac{1}{2\delta}\int_{1-\delta}^{1+\delta}\int_{0}^{2\pi}r|\hat{x}(r\omega)|^qd\omega dr\\
    &=\lim_{\delta\rightarrow0}\frac{1}{2\delta}\int_{0}^{\infty}\int_{0}^{2\pi}r|\chi^\delta(r\omega)\hat{x}(r\omega)|^qd\omega dr\\
    &=\lim_{\delta\rightarrow0}\frac{1}{2\delta}\|\chi^\delta\hat{x}\|_{L_q(\mathbb{R}^2)}^q.
  \end{align*}}
\end{remark}
Now we will focus on the proof of the endpoint case: $p=4/3$.
That is, for all $x\in \mathcal{S}(\mathcal{R}_\theta^2)$
  \beq\label{end}
  \|\chi^\delta \hat{x}\|_{L_{4/3}(\mathbb{R}^2)}\lesssim\delta ^{3/4}(\log{\delta^{-1}})^{1/4}\|x\|_{L_{4/3}(\mathcal{R}_\theta^2)}.\eeq
 \begin{proof}[Proof of \eqref{end}]
Let $x\in \mathcal{S}(\mathcal{R}_\theta^2)$.
By duality, we have
 \beqq
   \|\chi^\delta \hat{x}\|_{L_{4/3}(\mathbb{R}^2)}=\sup_{\|g\|_{L_{4}(\mathbb{R}^2)}=1}\left|\int_{\mathbb{R}^2}\chi^\delta(\xi)\hat{x}(\xi)g(-\xi)d\xi\right|.\eeqq
  By Lemma \ref{f*g}
  \beqq
  \int_{\mathbb{R}^2}\chi^\delta(\xi)\hat{x}(\xi)g(-\xi)d\xi=\tau_\theta( U_\theta(\hat{x}) U_\theta(\chi^\delta g)).\eeqq
 Then we get
   \beqq
 \|\chi^\delta \hat{x}\|_{L_{4/3}(\mathbb{R}^2)}=\sup_{\|g\|_{L_{4}(\mathbb{R}^2)}=1}\left|\tau_\theta( U_\theta(\hat{x}) U_\theta(\chi^\delta g))\right|\leq \|x\|_{L_{4/3}(\mathcal{R}_\theta^2)}\sup_{\|g\|_{L_{4}(\mathbb{R}^2)}=1}\|U_\theta(\chi^\delta g)\|_{L_{4}(\mathcal{R}_\theta^2)}.\eeqq
  Hence it suffices to show the following estimate for all $g\in{L_{4}(\mathbb{R}^2)}$,
 \beq\label{rrr2}
  \|U_\theta(\chi^\delta g)\|_{L_{4}(\mathcal{R}_\theta^2)}\lesssim \delta ^{3/4}(\log{\delta^{-1}})^{1/4}\|g\|_{L_{4}(\mathbb{R}^2)}\eeq
for sufficiently small $\delta$. For an integer $\ell\in \{0,1,...,[\delta^{-1/2}]\}$, we set
   \beqq
   \chi_\ell^\delta(\xi):=\chi^\delta(\xi)\chi_{2\pi \ell\delta^{1/2}\leq \arg \xi<2\pi(\ell+1)\delta^{1/2}}.\eeqq
   Here we suitably adjust the support of $\chi_{[\delta^{-1/2}]}^\delta$ so that $\sum_{l=0}^{[\delta^{-1/2}]}\chi_\ell^\delta(\xi)=\chi^\delta(\xi)$.
   We now split $\{0,1,...,[\delta^{-1/2}]\}$ into nine different subsets so that the supports of the functions with indices in each subset are contained in some sector centered at the origin of amplitude $\pi/4$. Let us fix one index set $I$. Without loss of generality we assume that
   \beqq
   I=\{0,1,...,[1/8\delta^{-1/2}]\}.\eeqq
   Given $g \in L_{4}(\mathbb{R}^2)$, it is reduced to showing $$\left\|U_\theta\Big(\sum_{\ell\in I}\chi_\ell^\delta g\Big)\right\|_{L_{4}(\mathcal{R}_\theta^2)}\lesssim \delta ^{3/4}(\log{\delta^{-1}})^{1/4}\|g\|_{L_{4}(\mathbb{R}^2)}.$$
   Without loss of generality, we may assume that $g$ is real-valued.
   Note that
   \begin{align}\label{for rem}
    \left\|U_\theta\Big(\sum_{\ell\in I}\chi_\ell^\delta g\Big)\right\|_{L_{4}(\mathcal{R}_\theta^2)}^{4}
    &=\tau_\theta\left(\Big|U_\theta\Big(\sum_{\ell\in I}\chi_\ell^\delta g\Big)^*U_\theta\Big(\sum_{\ell'\in I}\chi_{\ell'}^\delta g\Big)\Big|^{2}\right)\\
    &\nonumber\lesssim (I)+(II),
   \end{align}
 where
 \beqq (I)= \tau_\theta\left(\Big|\sum_{\ell,\ell^\prime\in I, |\ell-\ell^\prime|\leq 10^3}U_\theta(\chi_\ell^\delta g)^*U_\theta(\chi_{\ell^\prime}^\delta g)\Big|^{2}\right)\eeqq and
  \beqq (II)= \tau_\theta\left(\Big|\sum_{\ell,\ell^\prime\in I, |\ell-\ell^\prime|> 10^3}U_\theta(\chi_\ell^\delta g)^*U_\theta(\chi_{\ell^\prime}^\delta g)\Big|^{2}\right).\eeqq

   We first estimate part $(I)$.
   Let $\mathcal{N}=\mathcal{R}_\theta^2\bar{\otimes}\mathcal{B}(\ell_2(I))$, and $v=\tau_\theta\otimes \mathrm{Tr}$ be the trace on $\mathcal{N}$, where $\mathrm{Tr}$ is the usual trace on $\mathcal{B}(\ell_2(I))$. Applying the noncommutative H\"{o}lder inequality,
   \begin{align*}
     &\tau_\theta\left(\Big| \sum_{\ell\in I,\ell+i\in I}U_\theta(\chi_\ell^\delta g)^*U_\theta(\chi_{\ell+i}^\delta g)\Big|^{2}\right)\\ &= v\left(\Big|\Big(\sum_{\ell\in I,\ell+i\in I}U_\theta(\chi_\ell^\delta g)\otimes e_{\ell,1}\Big)^*\Big( \sum_{\ell\in I,\ell+i\in I}U_\theta(\chi_{\ell+i}^\delta g)\otimes e_{\ell,1}\Big)\Big|^2\right)
     \\
   &\leq\Big\|\sum_{\ell\in I,\ell+i\in I}U_\theta(\chi_\ell^\delta g)\otimes e_{\ell,1}\Big\|_{L_4(\mathcal{N})}^2\Big\|\sum_{\ell\in I, \ell+i\in I}U_\theta(\chi_{\ell+i}^\delta g)\otimes e_{\ell,1}\Big\|_{L_4(\mathcal{N})}^2\\
   &= \Big\|\Big( \sum_{\ell\in I,\ell+i\in I}|U_\theta(\chi_\ell^\delta g)|^2\Big)^{1/2}\Big\|_{L_4(\mathcal{R}_\theta^d)}^2\Big\|\Big( \sum_{\ell\in I,\ell+i\in I}|U_\theta(\chi_{\ell+i}^\delta g)|^2\Big)^{1/2}\Big\|_{L_4(\mathcal{R}_\theta^d)}^2.
   \end{align*}
Here $i\in \mathbb Z$ and $e_{\ell,1}$ is the matrix which has a unique non-zero entry $1$ at the position $(\ell,1)$. Then by the noncommutative Minkowski inequality,
 \begin{align*}
     (I)&=\tau_\theta\left(\Big|\sum_{i=-10^3}^{10^3} \sum_{\ell\in I,\ell+i\in I}U_\theta(\chi_\ell^\delta g)^*U_\theta(\chi_{\ell+i}^\delta g)\Big|^{2}\right)\\
   &\lesssim \sum_{i=-10^3}^{10^3}\tau_\theta\left(\Big| \sum_{\ell\in I,\ell+i\in I}U_\theta(\chi_\ell^\delta g)^*U_\theta(\chi_{\ell+i}^\delta g)\Big|^{2}\right)\\
   &\lesssim \sum_{i=-10^3}^{10^3}\Big\|\Big( \sum_{\ell\in I,\ell+i\in I}|U_\theta(\chi_\ell^\delta g)|^2\Big)^{1/2}\Big\|_4^2\Big\|\Big( \sum_{\ell\in I,\ell+i\in I}|U_\theta(\chi_{\ell+i}^\delta g)|^2\Big)^{1/2}\Big\|_4^2\\
   &\lesssim \tau_\theta\left(\Big( \sum_{\ell\in I}|U_\theta(\chi_\ell^\delta g)|^2\Big)^{2}\right)=\tau_\theta\Big( U_\theta\Big(\sum_{\ell\in I}(\tilde{\chi}_\ell^\delta \tilde{g})*_\theta(\chi_\ell^\delta g)\Big)^{2}\Big).\\
   \end{align*}
   Applying Lemma \ref{HY},
   \begin{align*}
    \tau_\theta\Big( U_\theta\Big(\sum_{\ell\in I}(\tilde{\chi}_\ell^\delta \tilde{g})*_\theta(\chi_\ell^\delta g)\Big)^{2}\Big)&=\Big\|\sum_{\ell\in I}(\tilde{\chi}_\ell^\delta \tilde{g})*_\theta(\chi_\ell^\delta g)\Big\|_{L_2(\mathbb{R}^2)}^2\\ &\leq\Big\|\sum_{\ell\in I}|\tilde{\chi}_\ell^\delta \tilde{g}|*|\chi_\ell^\delta g|\Big\|_{L_2(\mathbb{R}^2)}^2.\\
   \end{align*}
   We claim that
   \begin{align}\label{2r}\Big\|\sum_{\ell\in I}|\tilde{\chi}_\ell^\delta \tilde{g}|*|\chi_\ell^\delta g|\Big\|_{L_2(\mathbb{R}^2)} =\Big\|\Big(\sum_{\ell,\ell^\prime\in I}(|\tilde{\chi}_\ell^\delta \tilde{g}|*|\chi_{\ell^\prime}^\delta{g}|)^2\Big)^{1/2}\Big\|_{L_2(\mathbb{R}^2)}.
   \end{align}
  Indeed, by the two elementary facts that $\int_{\mathbb{R}^2}f(s)(g*h)(s)ds= \int_{\mathbb{R}^2}(f*\tilde{g})(s)h(s)ds$ and $f*g=g*f$,
   \begin{align*}
    \Big\|\sum_{\ell\in I}|\tilde{\chi}_\ell^\delta \tilde{g}|*|\chi_\ell^\delta g|\Big\|_{L_2(\mathbb{R}^2)}^2&=\int_{\mathbb{R}^2}\left(\sum_{\ell\in I}(|\tilde{\chi}_\ell^\delta \tilde{g}|*|\chi_{\ell}^\delta{g}|)(s)\right)^2ds\\
  &=\int_{\mathbb{R}^2}\sum_{\ell,\ell^\prime\in I}(|\tilde{\chi}_\ell^\delta \tilde{g}|*|\chi_\ell^\delta{g}|)(s)(|\tilde{\chi}_{\ell^\prime}^\delta \tilde{g}|*|\chi_{\ell^\prime}^\delta{g}|)(s)ds\\
  &=\Big\|\Big(\sum_{\ell,\ell^\prime\in I}(|\tilde{\chi}_{\ell}^\delta \tilde{g}|*|\chi_{\ell^\prime}^\delta{g}|)^2\Big)^{1/2}\Big\|_{L_2(\mathbb{R}^2)}^2.
   \end{align*}   Therefore, we get
    \beq\label{rrr5}
    (I)\lesssim\Big\|\Big(\sum_{\ell,\ell^\prime\in I}(|\tilde{\chi}_\ell^\delta \tilde{g}|*|\chi_{\ell^\prime}^\delta{g}|)^2\Big)^{1/2}\Big\|_{L_2(\mathbb{R}^2)}^{2}.\eeq

     Now we deal with part $(II)$. Setting $S_{\delta,\ell,\ell^\prime}=\text{supp}(\tilde{\chi}_{\ell^\prime}^\delta *\chi_{\ell^\prime}^\delta)$, by the Plancherel theorem and the H\"older inequality, one has
    \begin{align*}
    (II) &= \int_{\mathbb{R}^2}\Big|\sum_{\ell,\ell^\prime\in I, |\ell-\ell^\prime|> 10^3}(\tilde{\chi}_\ell^\delta \tilde{g})*_\theta(\chi_{\ell^\prime}^\delta g)\Big|^2ds \\
    &\lesssim \int_{\mathbb{R}^2}\Big(\sum_{\ell,\ell^\prime\in I, |\ell-\ell^\prime|> 10^3}|\tilde{\chi}_\ell^\delta \tilde{g}|*|\chi_{\ell^\prime}^\delta g|\Big)^2ds\\
    &\lesssim \int_{\mathbb{R}^2}\Big(\sum_{\ell,\ell^\prime\in I, |\ell-\ell^\prime|> 10^3}(|\tilde{\chi}_\ell^\delta \tilde{g}|*|\chi_{\ell^\prime}^\delta g|)^2\Big)\Big(\sum_{\ell,\ell^\prime\in I, |\ell-\ell^\prime|> 10^3}\chi_{S_{\delta,\ell,\ell^\prime}}\Big)ds.
    \end{align*}
    Using  Lemma 5.4 in \cite{Lai21}, which states that for $s\in\mathbb{R}^2$, there is a constant $C$ such that
    \beq\label{laige}
    \sum_{\ell,\ell^\prime\in I, |\ell-\ell^\prime|> 10^3}\chi_{S_{\delta,\ell,\ell^\prime}}(s)\leq C.\eeq
    Thus we have
    \beq\label{rrr6}
    (II)\lesssim \Big\|\Big(\sum_{\ell,\ell^\prime\in I}(|\tilde{\chi}_\ell^\delta \tilde{g}|*|\chi_{\ell^\prime}^\delta{g}|)^2\Big)^{1/2}\Big\|_{L_2(\mathbb{R}^2)}^{2}.\eeq
    By \eqref{rrr5} and \eqref{rrr6}
    \beq\label{rrr7}
    \left\|U(\sum_{\ell\in I}\chi_\ell^\delta g)\right\|_{L_{4}(\mathcal{R}_\theta^2)}^4\lesssim \Big\|\Big(\sum_{\ell,\ell^\prime\in I}(|\tilde{\chi}_\ell^\delta \tilde{g}|*|\chi_{\ell^\prime}^\delta{g}|)^2\Big)^{1/2}\Big\|_{L_2(\mathbb{R}^2)}^2=\sum_{\ell,\ell^\prime\in I}\Big\||\tilde{\chi}_\ell^\delta \tilde{g}|*|\chi_{\ell^\prime}^\delta{g}|\Big\|_{L_2(\mathbb{R}^2)}^2.\eeq
Hence it is reduced to  showing \beq\label{last}
\sum_{\ell,\ell^\prime\in I}\Big\||\tilde{\chi}_\ell^\delta \tilde{g}|*|\chi_{\ell^\prime}^\delta{g}|\Big\|_{L_2(\mathbb{R}^2)}^2
\lesssim \delta ^3(\log{\delta^{-1}})\|g\|_{L_{4}(\mathbb{R}^2)}^4.\eeq

    We need a lemma similar to Lemma 5.4.8 in \cite{GF}.
    \begin{lemma}\label{rrrl1}
    For $1\leq r\leq\infty$, there is a constant $C$ which is independent of $\delta$ and $g$ such that
    \beqq
    \Big\||\tilde{\chi}_\ell^\delta \tilde{g}|*|\chi_{\ell^\prime}^\delta{g}|\Big\|_{L_r(\mathbb{R}^2)}\leq C\left(\frac{\delta^{3/2}}{|\ell-\ell^\prime|+1}\right)^{\frac{1}{r^\prime}}\|\chi_\ell^\delta g\|_{L_r(\mathbb{R}^2)}\|\chi_{\ell^\prime}^\delta{g}\|_{L_r(\mathbb{R}^2)}.
    \eeqq
    \end{lemma}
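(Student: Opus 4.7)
The plan is to establish the estimate at the two endpoints $r=1$ and $r=\infty$, and then invoke bilinear Riesz-Thorin interpolation to cover all intermediate $r$. The case $r=1$ is immediate from Young's convolution inequality: one has $\||\tilde{\chi}_\ell^\delta \tilde g|*|\chi_{\ell'}^\delta g|\|_{L_1(\mathbb{R}^2)} \leq \|\chi_\ell^\delta g\|_{L_1(\mathbb{R}^2)}\|\chi_{\ell'}^\delta g\|_{L_1(\mathbb{R}^2)}$, and since $1/r'=0$ at $r=1$ the geometric prefactor equals $1$.

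The heart of the argument is the $r=\infty$ bound. Writing $E_\ell$ for the support of $\chi_\ell^\delta$ and observing that $|\tilde{\chi}_\ell^\delta \tilde g|$ is supported in $-E_\ell$, one has the pointwise bound
\[
(|\tilde{\chi}_\ell^\delta \tilde g|*|\chi_{\ell'}^\delta g|)(x) \leq \|\chi_\ell^\delta g\|_{L_\infty}\|\chi_{\ell'}^\delta g\|_{L_\infty}\,|E_{\ell'}\cap(x+E_\ell)|
\]
for every $x\in\mathbb{R}^2$ (after the change of variable $y\mapsto x-y$). The matter therefore reduces to the purely geometric claim
\[
\sup_{v\in\mathbb{R}^2}|E_\ell\cap(E_{\ell'}+v)| \leq C\,\delta^{3/2}/(|\ell-\ell'|+1).
\]
When $|\ell-\ell'|$ is bounded this follows from the trivial bound $|E_\ell|\lesssim\delta^{3/2}$. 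For $|\ell-\ell'|\gg 1$ I would contain each arc in a parallelogram of radial thickness $\lesssim\delta$ and tangential length $\lesssim\delta^{1/2}$ aligned along the angular direction $2\pi\ell\delta^{1/2}$, and similarly for $E_{\ell'}$. These two parallelograms cross at an angle comparable to $|\ell-\ell'|\delta^{1/2}$, and the elementary estimate for the area of the intersection of two infinite slabs of width $\delta$ meeting at angle $\alpha$ gives $\lesssim\delta^2/\sin\alpha\approx\delta^{3/2}/|\ell-\ell'|$, as required.

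With both endpoint estimates in hand, bilinear Riesz-Thorin interpolation applied to the convolution map $(f_1,f_2)\mapsto f_1*f_2$, restricted to functions supported respectively in the fixed arcs $-E_\ell$ and $E_{\ell'}$, interpolates between norm $1$ at $(L_1,L_1)\to L_1$ and norm $M:=C\delta^{3/2}/(|\ell-\ell'|+1)$ at $(L_\infty,L_\infty)\to L_\infty$, yielding the bound with constant $M^{1/r'}$ at general $r\in[1,\infty]$. The main obstacle is the geometric estimate above; the delicate point is that the $E_\ell$ are curved annular arcs rather than straight slabs, so one must verify that the tangent direction of the annulus turns by only $O(\delta^{1/2})$ across the tangential length $\delta^{1/2}$ of $E_\ell$. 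This turning is strictly smaller than the separation angle $|\ell-\ell'|\delta^{1/2}$ when $|\ell-\ell'|\geq 2$, so the straight-slab transversality computation is rigorous.
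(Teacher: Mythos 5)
Your proof is correct and takes essentially the same route as the paper: both interpolate bilinearly between the trivial $L_1$ Young estimate and an $L_\infty$ estimate whose heart is the geometric fact that a translate of $\operatorname{Supp}\chi_\ell^\delta$ overlaps $\operatorname{Supp}\chi_{\ell'}^\delta$ in area $\lesssim \delta^{3/2}/(|\ell-\ell'|+1)$. The paper realizes this overlap bound by showing the intersection lies in a rectangle of dimensions $\sim\delta\times\delta^{1/2}/(1+|\ell-\ell'|)$, while you compute the area directly from the two-slab transversality identity $\delta^2/\sin\alpha$ with $\alpha\sim|\ell-\ell'|\delta^{1/2}$ — the same idea, packaged slightly differently.
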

    \begin{proof}
     For $\ell,\ell^\prime\in I$, we define the bilinear operator
    \beqq
    Bi_{\ell,\ell^\prime}(g,h):=(\tilde{\chi}_\ell^\delta \tilde{g})*(\chi_{\ell^\prime}^\delta{h}).\eeqq
    It suffices to prove the following estimate
    \beq
    \Big\|Bi_{\ell,\ell^\prime}(g,h)\Big\|_{L_r(\mathbb{R}^2)}\lesssim\left(\frac{\delta^{3/2}}{|\ell-\ell^\prime|+1}\right)^{\frac{1}{r^\prime}}\| g\|_{L_r(\mathbb{R}^2)}\| h\|_{L_r(\mathbb{R}^2)}.
    \eeq
    By the construction of $\chi_\ell^\delta$, $\text{supp}(\tilde{\chi}_\ell^\delta)$ is contained in a rectangle of dimensions $\thicksim \delta\times\delta^{1/2}$ 
     with the direction of the width being $e^{i2\pi\ell\delta^{1/2}}$ and the direction of the length $ie^{i2\pi\ell\delta^{1/2}}$.
      While $\text{supp}({\chi_{\ell^\prime}^\delta})$ is contained in a rectangle of dimensions $\thicksim \delta\times\delta^{1/2}$, 
     with the direction of the width being $e^{i2\pi\ell^\prime\delta^{1/2}}$ and the direction of the length $ie^{i2\pi\ell^\prime\delta^{1/2}}$.

     We claim that for a given $z\in\mathbb{R}^2$, $(z-\text{supp}(\tilde{\chi}_\ell^\delta))\cap\text{supp}(\chi_{\ell^\prime}^\delta)$ is contained in a rectangle of dimensions $\thicksim \delta\times\frac{\delta^{1/2}}{1+|\ell-\ell^\prime|}$ .
    Indeed, we may assume that the intersection is not empty. Let $x,y\in (z-\text{supp}(\tilde{\chi}_\ell^\delta))\cap\text{supp}(\chi_{\ell^\prime}^\delta)$,
    then $x-y$ has the form of $t_1e^{i2\pi\ell\delta^{1/2}}+t_2ie^{i2\pi\ell\delta^{1/2}}$. On one hand, $x,y\in (z-\text{supp}(\tilde{\chi}_\ell^\delta))$ implies that $|t_1|\lesssim\delta$ and $|t_2|\lesssim\delta^{1/2}$. On the other hand,
    $x,y\in \text{supp}(\chi_{\ell^\prime}^\delta)$ implies that  $|\langle x-y, e^{i2\pi\ell^\prime\delta^{1/2}}\rangle|\lesssim\delta $. Thus
    \beqq |2t_2\sin(2\pi\delta^{1/2}|\ell-\ell^\prime|)|=|\langle t_2ie^{i2\pi\ell\delta^{1/2}}, e^{i2\pi\ell^\prime\delta^{1/2}}\rangle|\leq|t_1|+|\langle x-y, e^{i2\pi\ell^\prime\delta^{1/2}}\rangle|\lesssim\delta.\eeqq
    Note that $2\pi\delta^{1/2}|\ell-\ell^\prime|<\pi/4$, so $ \sin(2\pi\delta^{1/2}|\ell-\ell^\prime|)\thicksim \delta^{1/2}|\ell-\ell^\prime|$. And thus we have
    $|t_2|\lesssim \frac{\delta^{1/2}}{|\ell-\ell^\prime|}$,
 which, together with $|t_2|\lesssim\delta^{1/2}$, yields the claim.

    By the claim, \beqq
      \|\tilde{\chi}_\ell^\delta*\chi_{\ell^\prime}^\delta\|_\infty\leq \sup_{z\in\mathbb{R}^2}|(z-\text{supp}(\tilde{\chi}_\ell^\delta))\cap\text{supp}(\chi_{\ell^\prime}^\delta)|\lesssim\frac{\delta^{3/2}}{|\ell-\ell^\prime|+1}.\eeqq
      Thus we have \beq\label{i1}
    \Big\|Bi_{\ell,\ell^\prime}(g,h)\Big\|_{L_\infty(\mathbb{R}^2)}\leq \|\tilde{\chi}_\ell^\delta*\chi_{\ell^\prime}^\delta\|_\infty\|g\|_\infty\|h\|_\infty\lesssim\frac{\delta^{3/2}}{|\ell-\ell^\prime|+1}\|g\|_\infty\|h\|_\infty.\eeq
    Note that we also have
    \beq\label{i2}
    \Big\|Bi_{\ell,\ell^\prime}(g,h)\Big\|_{L_1(\mathbb{R}^2)}\leq \|g\|_{L_1(\mathbb{R}^2)}\|h\|_{L_1(\mathbb{R}^2)}.\eeq
      By interpolating \eqref{i1} and \eqref{i2}, we have \beqq
    \Big\|Bi_{\ell,\ell^\prime}(g,h)\Big\|_{L_r(\mathbb{R}^2)}\lesssim\left(\frac{\delta^{3/2}}{|\ell-\ell^\prime|+1}\right)^{\frac{1}{r^\prime}}\| g\|_{L_r(\mathbb{R}^2)}\| h\|_{L_r(\mathbb{R}^2)}.
    \eeqq

    \end{proof}

    Now we may conclude the proof. By the H\"{o}lder inequality and the Young inequality
 for discrete $L_p$ spaces, and applying Lemma \ref{rrrl1} to $r=2$, one gets  \begin{align*}
    \sum_{\ell,\ell^\prime\in I}\Big\||\tilde{\chi}_\ell^\delta \tilde{g}|*|\chi_{\ell^\prime}^\delta{g}|\Big\|_{L_2(\mathbb{R}^2)}^2&\lesssim
      \delta^{\frac{3}{2}}\left(\sum_{\ell,\ell^\prime\in I}\|\chi_\ell^\delta g\|_{L_2(\mathbb{R}^2)}^2\frac{\|\chi_{\ell^\prime}^\delta{g}\|_{L_2(\mathbb{R}^2)}^2}{(|\ell-\ell^\prime|+1)}\right)\\
   &\lesssim
      \delta^{\frac{3}{2}}\left(\sum_{\ell\in I}\|\chi_\ell^\delta g\|_{L_2(\mathbb{R}^2)}^4\right)^{1/2}\left[\sum_{\ell\in I}\left(\sum_{\ell^\prime\in I}\frac{\|\chi_{\ell^\prime}^\delta{g}\|_{L_2(\mathbb{R}^2)}^2}{(|\ell-\ell^\prime|+1)}\right)^2\right]^{1/2}\\
     &\lesssim
      \delta^{\frac{3}{2}}\left(\sum_{\ell\in I}\|\chi_\ell^\delta g\|_{L_2(\mathbb{R}^2)}^4\right)^{1/2}\left(\sum_{\ell\in I}\|\chi_\ell^\delta g\|_{L_2(\mathbb{R}^2)}^4\right)^{1/2}\left(\sum_{\ell\in I}\frac{1}{|\ell|+1}\right)\\
      &\lesssim
      \delta^{\frac{3}{2}}(\log\delta^{-1})\left(\sum_{\ell\in I}\|\chi_\ell^\delta g\|_{L_2(\mathbb{R}^2)}^4\right).
  \end{align*}
  Then by the H\"{o}lder inequality,  $\|\chi_\ell^\delta g\|_{L_2(\mathbb{R}^2)}\leq \delta^{\frac{3}{2}\cdot(1/2-1/4)}\|\chi_\ell^\delta g\|_{L_4(\mathbb{R}^2)}. $ Hence we have
  \beqq
  \sum_{\ell,\ell^\prime\in I}\Big\||\tilde{\chi}_\ell^\delta \tilde{g}|*|\chi_{\ell^\prime}^\delta{g}|\Big\|_{L_2(\mathbb{R}^2)}^2\lesssim \delta ^3(\log{\delta^{-1}})\left(\sum_{\ell\in I}\|\chi_\ell^\delta g\|_{L_4(\mathbb{R}^2)}^4\right)\lesssim \delta ^3(\log{\delta^{-1}})\|g\|_{L_{4}(\mathbb{R}^2)}^4.\eeqq
\end{proof}
\begin{remark}\label{diff}{\rm
(i). The above arguments for the endpoint case $p=4/3$ is motivated by the classical ones (see e.g. Chapter 5 in \cite{GF}). But the noncommutativity makes the arguments much more involved. For instance, due to the fact that $|x^*x|^2$ is not necessarily equal to $|xx|^2$ for a general operator $x$, we have to separate the double sums over $\ell$ and $\ell'$ in  \eqref{for rem} according to size of the difference $|\ell-\ell'|$ to avoid the uncontrollable overlapping of  $\{{S_{\delta,\ell,\ell^\prime}}\}_{\ell,\ell'\in I}$ around the origin; this in turn yields some new geometric arguments such as the ones in Lemma 5.4 in \cite{Lai21} and Lemma \ref{rrrl1}.

(ii). In the case $\theta=0$, the arguments for the endpoint case $p=4/3$ works also for the other cases $1\leq p<4/3$ (see e.g. Chapter 5 in \cite{GF}). Nevertheless, in the present noncommutative setting $\theta\neq0$, we came across some difficulties in adapting the proof of the endpoint estimate \eqref{rrr1} for the non-endpoint one \eqref{rrr}. For instance, until the moment of writing, we do not know how to obtain the following variant of \eqref{2r} where $r=2$,
  \begin{equation}\label{ques}
  \Big\|\sum_{\ell\in I}|\tilde{\chi}_\ell^\delta \tilde{g}|*|\chi_\ell^\delta g|\Big\|_{L_r(\mathbb{R}^2)}\lesssim
 \Big\|\Big(\sum_{\ell,\ell^\prime\in I}(|\tilde{\chi}_\ell^\delta \tilde{g}|*|\chi_{\ell^\prime}^\delta{g}|)^r\Big)^{1/r}\Big\|_{L_r(\mathbb{R}^2)}
  \end{equation}
 for all $1< r<2$. Note that the estimate is trivial when $r=1$.}
\end{remark}

\section{The proof of Theorem \ref{rt2}}\label{s5}
Before the proof of Theorem \ref{rt2}, we show a type of the Young inequality on $\mathcal{R}_\theta^d$ via some properties of operator spaces, for which we refer to \cite{Pi}. Recall that the opposite algebra of quantum Euclidean space $(\mathcal{R}_\theta^d)_{op}$ is obtained by preserving linear and adjoint structures but reversing the product, i.e. for $x,y\in (\mathcal{R}_\theta^d)_{op}$, the product $\cdot$ is defined by
  $x\cdot y=yx.$ As in \cite{GJP}, one can define a normal $*$-homomorphism map $\pi_\theta$  from $L_\infty(\mathbb{R}^d)$ to $\mathcal{R}_\theta^d\bar{\otimes}(\mathcal{R}_\theta^d)_{op}$, which is determined by
  \beqq \pi_\theta(\exp_t):=U_\theta(t)\otimes U_\theta(t)^*.\eeqq

We have the following lemma which should be known to experts. For  the sake of completeness, we give the details of the proof.
\begin{lemma}\label{1-0}
For $\psi\in L_1(\mathbb{R}^d)$ and $x\in\mathcal{S}(\mathcal{R}_\theta^d)$, let $T_\psi(x):=U_\theta(\psi \hat{x})$, then
\beqq \|T_\psi(x)\|_\infty\leq\|\check{\psi}\|_\infty\|x\|_1.\eeqq
Therefore, $T_\psi$ extends to a bounded linear operator from
$L_1(\mathcal{R}_\theta^d)$ to $L_\infty(\mathcal{R}_\theta^d)$.
\end{lemma}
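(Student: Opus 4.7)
The strategy is to realize $T_\psi(x)$ as a partial-trace ``slice'' of the element $\pi_\theta(\check\psi)\in\mathcal{R}_\theta^d\bar\otimes(\mathcal{R}_\theta^d)_{op}$, and then combine two well-known ingredients: contractivity of the $*$-homomorphism $\pi_\theta$ and the noncommutative H\"older inequality on the tensor product algebra. Since $\hat{\check\psi}=\psi$, one has the Bochner-convergent representation
\[
\pi_\theta(\check\psi)=\int_{\mathbb{R}^d}\psi(t)\,U_\theta(t)\otimes U_\theta(t)^*\,dt,
\]
with the second tensor factor sitting in $(\mathcal{R}_\theta^d)_{op}$. Remembering that multiplication in the second factor is reversed, so that $U_\theta(t)^*\cdot_{op} x=x\,U_\theta(t)^*$, right-multiplication by $1\otimes x$ followed by application of $\mathrm{id}\otimes\tau_\theta$, together with the identity $\hat x(t)=\tau_\theta(x\,U_\theta(t)^*)$, produces the key formula
\[
T_\psi(x)=(\mathrm{id}\otimes\tau_\theta)\bigl(\pi_\theta(\check\psi)\cdot(1\otimes x)\bigr).
\]

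By $L_1$--$L_\infty$ duality on $\mathcal{R}_\theta^d$ it suffices to bound $|\tau_\theta(y\,T_\psi(x))|$ uniformly over $y\in\mathcal{S}(\mathcal{R}_\theta^d)$ with $\|y\|_1\le 1$. A slice-map computation, combined with cyclicity of the trace, rewrites this pairing as
\[
\tau_\theta(y\,T_\psi(x))=(\tau_\theta\otimes\tau_\theta)\bigl((y\otimes x)\,\pi_\theta(\check\psi)\bigr).
\]
Applying H\"older in the tracial von Neumann algebra $\mathcal{R}_\theta^d\bar\otimes(\mathcal{R}_\theta^d)_{op}$, together with the factorization $\|y\otimes x\|_1=\|y\|_1\|x\|_1$ on simple tensors, yields
\[
|\tau_\theta(y\,T_\psi(x))|\le\|y\|_1\,\|x\|_1\,\|\pi_\theta(\check\psi)\|_\infty.
\]
Since $\pi_\theta$ is a $*$-homomorphism between $C^*$-algebras it is contractive, so $\|\pi_\theta(\check\psi)\|_\infty\le\|\check\psi\|_\infty$. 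Taking the supremum over $y$ gives $\|T_\psi(x)\|_\infty\le\|\check\psi\|_\infty\|x\|_1$ on the Schwartz subspace, and the extension to a bounded map $L_1(\mathcal{R}_\theta^d)\to L_\infty(\mathcal{R}_\theta^d)$ is then immediate by density of $\mathcal{S}(\mathcal{R}_\theta^d)$ in $L_1(\mathcal{R}_\theta^d)$.

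The conceptual core of the argument is the first identity, which converts the Fourier-analytic definition of $T_\psi$ into an operator-algebraic slice of $\pi_\theta(\check\psi)$; once this bridge is established, every subsequent step reduces to a routine invocation of standard machinery (contractivity of $*$-homomorphisms, H\"older, tensorization of $L_1$-norms). The main point demanding care is the bookkeeping of the opposite product in the second tensor factor, which is precisely what makes the slice-map manipulations yield $T_\psi(x)$ rather than a twisted variant.
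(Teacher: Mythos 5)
Your proof is correct, and it shares the conceptual backbone of the paper's argument: both start from the same key identity
\[
T_\psi(x)=(\mathrm{id}\otimes\tau_\theta)\bigl(\pi_\theta(\check\psi)\,(1\otimes x)\bigr),
\]
and both exploit that $\pi_\theta$ is a $*$-homomorphism to get $\|\pi_\theta(\check\psi)\|\le\|\check\psi\|_\infty$. Where you diverge is in how you pass from this identity to the norm bound. The paper invokes Theorem~2.5.2 of Pisier's operator space book, which identifies normal maps $L_1(\mathcal{R}_\theta^d)\to\mathcal{R}_\theta^d$ with elements of $\mathcal{R}_\theta^d\bar\otimes(\mathcal{R}_\theta^d)_{op}$ isometrically, so that $\|T_\psi\|\le\|\pi_\theta(\check\psi)\|$ falls out as a black box. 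You instead unpack what lies behind that theorem in this tracial setting: dualize against $y\in L_1(\mathcal{R}_\theta^d)$, rewrite $\tau_\theta(y\,T_\psi(x))$ as $(\tau_\theta\otimes\tau_\theta)\bigl((y\otimes x)\pi_\theta(\check\psi)\bigr)$ via traciality of $\tau_\theta$ (so $\tau_\theta(U_\theta(t)^*x)=\hat x(t)$ regardless of the opposite product), then apply noncommutative H\"older on $\mathcal{R}_\theta^d\bar\otimes(\mathcal{R}_\theta^d)_{op}$ and the factorization $\|y\otimes x\|_1=\|y\|_1\|x\|_1$. This is a genuinely more elementary route that avoids the operator space machinery; it suffices because the lemma only asserts boundedness (Pisier's result actually gives the stronger complete-boundedness, which the paper does not need here). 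Your bookkeeping of the opposite product, $U_\theta(t)^*\cdot_{op}x=x\,U_\theta(t)^*$, matches the paper's computation of the integrand as $U_\theta(t)\otimes(x\psi(t)U_\theta(t)^*)$, so the slice-map identity is justified correctly.
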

\begin{proof}
  We claim that \beq\label{pi}
  T_\psi(x)=(id_{\mathcal{R}_\theta^d}\otimes\tau_\theta)(id_{\mathcal{R}_\theta^d}\otimes x)(\pi_\theta(\check{\psi})).\eeq
  Indeed,
   \begin{align*}
  (id_{\mathcal{R}_\theta^d}\otimes\tau_\theta)(id_{\mathcal{R}_\theta^d}\otimes x)(\pi_\theta(\check{\psi}))&= (id_{\mathcal{R}_\theta^d}\otimes\tau_\theta)\Big(\int_{\mathbb{R}^d}U_\theta(t)\otimes (x\psi(t)U_\theta(t)^*)dt\Big)\\
  &=\int_{\mathbb{R}^d} U_\theta(t)\tau_\theta(x\psi(t)U_\theta(t)^*)dt\\
 &=\int_{\mathbb{R}^d} U_\theta(t) \hat{x}(t)\psi(t)dt=T_{\psi}(x).
\end{align*}
Then noting that $(\mathcal{R}_\theta^d)_{op}$ is the dual space of $L_1(\mathcal{R}_\theta^d)$, by Theorem 2.5.2 in \cite{Pi}, one gets \beqq\|T_\psi\|_{L_1(\mathcal{R}_\theta^d)\rightarrow \mathcal{R}_\theta^d}\leq\|\pi_\theta(\check{\psi})\|_{\mathcal{R}_\theta^d\bar{\otimes}(\mathcal{R}_\theta^d)_{op}}\leq \|\check{\psi}\|_\infty.\eeqq

\end{proof}
Now we are at the position to prove Theorem \ref{rt2}.
\begin{proof}[Proof of Theorem \ref{rt2}]
  We first prove the case when $1\leq p<\frac{2(d+1)}{d+3}$.
  Let $\phi(s)$ be a bump function supported on the unit ball and $\phi(s)=1$ when $s\in B(0,1/2)$, $\phi_0(s):=\phi(s)$ and $\phi_k(s):=\phi({s}/{2^k})- \phi({s}/{2^{k-1}})$ when $k\geq1$, so that we have \beqq \sum_{k=0}^\infty \phi_k(s)=1, \quad \text{for all}\: s\in \mathbb{R}^d. \eeqq
    Setting $\check{d\sigma}(s):=\int_{\mathbb{R}^d} e^{2\pi i\langle s,\xi\rangle}d\sigma(\xi)$. For $x\in\mathcal{S}(\mathcal{R}_\theta^d)$, we may assume that $x=U_\theta(f)$ for some $f\in\mathcal{S}(\mathbb{R}^d)$,
 then by using twice the Parseval relation on $\mathbb{R}^d$ and Lemma \ref{f*g},
  \begin{align*}
    \|f\|_{L_2(S^{d-1},d\sigma)}^2&=\langle \check{f},  \check{f}*\check{d\sigma}\rangle_{L_2(\mathbb{R}^d)}\\&=\sum_{k=0}^\infty\langle \check{f},  \check{f}*(\phi_k\check{d\sigma})\rangle_{L_2(\mathbb{R}^d)} \\
    &=\sum_{k=0}^\infty\int_{\mathbb{R}^d}\overline{f(t)}\widehat{(\phi_k\check{d\sigma})}(t)f(t)dt\\
& =\sum_{k=0}^\infty\langle  U_\theta(f),U_\theta(\widehat{(\phi_k\check{d\sigma})}f)\rangle_{L_2(\mathcal{R}_\theta^d)}\\
&\leq \sum_{k=0}^\infty\|U_\theta(f)\|_{L_p(\mathcal{R}_\theta^d)}\Big\|U_\theta(\widehat{(\phi_k\check{d\sigma})}f)\Big\|_{L_{p^\prime}(\mathcal{R}_\theta^d)}.
  \end{align*}
  Thus it suffices to show that $\sum_{k=0}^\infty\Big\|U_\theta(\widehat{(\phi_k\check{d\sigma})}f)\Big\|_{L_{p^\prime}(\mathcal{R}_\theta^d)}\lesssim \|U_\theta(f)\|_{L_p(\mathcal{R}_\theta^d)}$ when $1\leq p< \frac{2(d+1)}{d+3}$.
  It is well known that  $|\check{d\sigma}(s)|\lesssim (1+|s|)^{\frac{-(d-1)}{2}}$ (see e.g. \cite{MS}), thus
 $\|\phi_k\check{d\sigma}\|_\infty\lesssim 2^{-k(d-1)/2}$. By Lemma \ref{1-0} we have
  \beq\label{l1}
  \Big\|U_\theta(\widehat{(\phi_k\check{d\sigma})}f)\Big\|_{L_{\infty}(\mathcal{R}_\theta^d)}\lesssim 2^{-k(d-1)/2}\|U_\theta(f)\|_{L_{1}(\mathcal{R}_\theta^d)}.
  \eeq
  On the other hand,  we claim that $\Big\|\widehat{\phi_k\check{d\sigma}}\Big\|_\infty\lesssim 2^k$. Indeed,
  for $\xi\in\mathbb{R}^d$
\beqq \widehat{\phi_k\check{d\sigma}}(\xi)=\int_{S^{d-1}}2^{kd}\hat{\phi}(2^k(\xi-\eta))d\sigma(\eta).\eeqq
Let $A=\{\eta\in S^{d-1}:|\xi-\eta|< 2^{-k}\}$ and $A_j=\{\eta\in S^{d-1}:2^{j-k}\leq|\xi-\eta|< 2^{j-k+1}\}$, where $j\geq0$. Then by  the rapid decay of $\hat{\phi}$,
\begin{align*}
 \Big|\widehat{\phi_k\check{d\sigma}}(\xi)\Big|&\leq \int_{A}2^{kd}|\hat{\phi}(2^k(\xi-\eta))|d\sigma(\eta)+ \sum_{j=0}^\infty\int_{A_j}2^{kd}|\hat{\phi}(2^k(\xi-\eta))|d\sigma(\eta)\\
 &\lesssim \int_{A}2^{kd}d\sigma(\eta)+\sum_{j=0}^\infty\int_{A_j}2^{kd}(2^k|\xi-\eta|)^{-100d}d\sigma(\eta)\\
 &\lesssim \sum_{j=0}^\infty 2^{kd}2^{-100jd}\sigma(B(\xi,2^{j-k}))
\lesssim 2^{k}.
\end{align*}
Here the final inequality follows from the fact that $\sigma(B(\xi,2^{j-k}))\lesssim\min\{1, 2^{(d-1)(j-k)}\}.$
  Then by Lemma \ref{HY}, one has
  \beq\label{l2}
\Big\|U_\theta(\widehat{(\phi_k\check{d\sigma})}f)\Big\|_{L_{2}(\mathcal{R}_\theta^d)}\lesssim2^k\|f\|_{L_2(\mathbb{R}^d)}\lesssim 2^k \|U_\theta(f)\|_{L_{2}(\mathcal{R}_\theta^d)}.\eeq
By interpolation between \eqref{l1} and \eqref{l2},
\beq\label{lp}
\Big\|U_\theta(\widehat{(\phi_k\check{d\sigma})}f)\Big\|_{L_{p^\prime}(\mathcal{R}_\theta^d)}\lesssim 2^{c(p,d)k}\|U_\theta(f)\|_{L_p(\mathcal{R}_\theta^d)},\eeq
 where $c(p,d)=(d+1)(1/2-1/p)+1$. When $p< \frac{2(d+1)}{d+3}$, we have $c(p,d)<0$. Then by \eqref{lp},
 $\sum_{k=0}^\infty\Big\|U_\theta(\widehat{(\phi_k\check{d\sigma})}f)\Big\|_{L_{p^\prime}(\mathcal{R}_\theta^d)}\lesssim \|U_\theta(f)\|_{L_p(\mathcal{R}_\theta^d)}$.

 Finally we consider the endpoint case,  $p=\frac{2(d+1)}{d+3}$. By a smoothing partition of unity, using the similar arguments as the ones in the proof of Proposition \ref{local} , it suffices to show that
 \beq\label{sp} \|f\|_{L_2(S^{d-1},\psi d\sigma)}\lesssim \|U_\theta(f)\|_{L_p(\mathcal{R}_\theta^d)}\eeq
 for a smoothing function $\psi$ supported on a neighborhood of the north pole $(0,0,...,0,1).$
 For $\xi=(\xi^\prime,\xi_d)\in\mathbb{R}^d$, where $\xi^\prime\in \mathbb{R}^{d-1}$, let $\varphi(\xi^\prime):=(1-|\xi^\prime|^2)^{1/2}$ be the graph function of $S^{d-1}$ near the north pole. We set $d\mu:=\psi d\sigma=\psi(\xi^\prime, \varphi(\xi^\prime))(1+|\nabla \varphi(\xi^\prime)|^2)^{1/2}d\xi^\prime.$
 In order to get estimate \eqref{sp}, we will use Stein's analytic interpolation theorem (see e.g. \cite{Lai21, St, Vg}).
 For $\mathrm{Re}(z)>0$, we define \beqq M_z(\xi):=\frac{1}{\Gamma(z)}(\xi_d-\varphi(\xi^\prime))_+^{z-1}\chi(\xi_d-\varphi(\xi^\prime))\psi(\xi^\prime, \varphi(\xi^\prime))(1+|\nabla \varphi(\xi^\prime)|^2)^{1/2},\eeqq where $(\cdot)_+$ refers to the positive part of a function, $\Gamma$ is the Gamma function and $\chi\in C_0(\mathbb{R})$ is a smooth cut-off function which equals $1$ on the unit ball $B(0,1)$ and equals $0$ outside the ball $B(0,2)$.
Then $M_z$ extends to a distribution-valued analytic function on the whole complex plane $\mathbb{C}$ by a standard argument (see e.g. page 296-300 of \cite{MS}). Let us collect some properties of $M_z$ :\\
(i). When $z=0$, $(M_zf)^{\check{}}=C_0\check{f}*\check{d\mu}$, for some constant $C_0$ and $f\in\mathcal{S}(\mathbb{R}^d)$;\\
(ii). When $\mathrm{Re}z=1$, $\|M_z\|_\infty\leq C_z^1$, where $C_z^1$ has at most exponential growth in $|\mathrm{Im} z|$;\\
(iii).  When $\mathrm{Re}z=-\frac{d-1}{2}$, $\|\check{M_z}\|_\infty\leq C_z^2$, where $C_z^2$ has at most exponential growth in $|\mathrm{Im} z|.$

Let $T_z$ be the operator from $\mathcal{S}(\mathcal{R}_\theta^d)$ to $\mathcal{S}^\prime(\mathcal{R}_\theta^d)$: for
 $U_\theta(f)\in \mathcal{S}(\mathcal{R}_\theta^d)$,
 \beq\label{Tz} (T_z(U_\theta(f)), U_\theta(g)):=(M_zf,\tilde{g}), \quad \text{ for\: all\:} g\in \mathcal{S}(\mathbb{R}^d).\eeq
 Then $T_\cdot$ is an operator-valued analytic function on the whole complex plane $\mathbb{C}$.
Note that when $\mathrm{Re}z=1$, $T_z(U_\theta(f))=U_\theta(M_zf)$, then by Lemma \ref{HY}, \beq\label{T2}\|T_z(U_\theta(f))\|_{L_2(\mathcal{R}_\theta^d)}\leq C_z^1\|U_\theta(f)\|_{L_2(\mathcal{R}_\theta^d)}.\eeq
 When $\mathrm{Re}z=-\frac{d-1}{2}$, $T_z(U_\theta(f))=(id_{\mathcal{R}_\theta^d}\otimes\tau_\theta)(id_{\mathcal{R}_\theta^d}\otimes U_\theta(f))(\pi_\theta(\check{M_z}))$. As in the proof of Lemma \ref{1-0},
 \beq\label{Tinfty}\|T_z(U_\theta(f))\|_{L_\infty(\mathcal{R}_\theta^d)}\leq C_z^2\|U_\theta(f)\|_{L_1(\mathcal{R}_\theta^d)}.\eeq
Applying Stein's analytic interpolation theorem to \eqref{T2} and \eqref{Tinfty}, one has
\beq\label{Tp} \|T_0(U_\theta(f))\|_{L_{p^\prime}(\mathcal{R}_\theta^d)}\lesssim\|U_\theta(f)\|_{L_p(\mathcal{R}_\theta^d)}.\eeq
As in the non-endpoint case, given $x=U_\theta(f)\in\mathcal{S}(\mathcal{R}_\theta^d)$, by the property (i) of $M_z$,
\begin{equation}\label{tz1}
 C_0\|f\|_{L_2(S^{d-1},\psi d\sigma)}^2 =C_0\langle \check{f}, \check{f}*\check{d\mu}\rangle_{L_2(\mathbb{R}^d)}
=\langle\check{f}, (M_0f)^{\check{}} \rangle_{L_2(\mathbb{R}^d)}
=(M_0f,\bar{f}).
\end{equation}
Here the finally equality holds in the sense of distribution. By \eqref{Tz}, \eqref{Tp}, the H\"{o}lder inequality and Lemma \ref{f*g},
\begin{align}\label{tz2}
  (M_0f,\bar{f})&=(T_0(U_\theta(f)),U_\theta(\bar{\tilde{f}}))\nonumber\\
  & =\tau_\theta(T_0(U_\theta(f))U_\theta(\bar{\tilde{f}}))\nonumber \\
  &\leq \|T_0(U_\theta(f))\|_{L_{p^\prime}(\mathcal{R}_\theta^d)}\|U_\theta(\bar{\tilde{f}})\|_{L_{p}(\mathcal{R}_\theta^d)}\nonumber\\
  &\lesssim \|U_\theta(f)\|_{L_{p}(\mathcal{R}_\theta^d)}^2.
  \end{align}
Then \eqref{sp} follows from \eqref{tz1} and \eqref{tz2}.
  \end{proof}
\noindent {\bf Acknowledgements} \
G. Hong and L. Wang were supported by National Natural Science Foundation of China (No. 12071355). X. Lai was supported by National Natural Science Foundation of China (No. 11801118), Fundamental Research Funds for the Central Universities (No. FRFCU5710050121) and China Postdoctoral Science Foundation (No. 2017M621253, No. 2018T110279).
\bibliographystyle{amsplain}

\end{document}